\numberwithin{equation}{section}
\newcommand{\R}{{\mathbb R}}
\newtheorem{thm}{Theorem}[section]
\newtheorem{lemma}{Lemma}
\newtheorem{cor}[thm]{Corollary}
\title{Sums of arctangents and sums of products of arctangents \thanks{AMS Classification: 65B10}}
\author[]{Kunle Adegoke \thanks{adegoke00@gmail.com}}
\affil{Department of Physics and Engineering Physics, \mbox{Obafemi Awolowo University}, 220005 Ile-Ife, Nigeria}
\begin{document}

\date{}

\maketitle

\begin{abstract}
\noindent We present new infinite arctangent sums and infinite sums of products of arctangents. Many previously known evaluations appear as special cases of the general results derived in this paper.
\end{abstract}

\section{Introduction}
This paper reports the evaluation of infinite arctangent sums such as
\begin{equation}\label{equ.nekey4x}
\sum_{k = 1}^\infty  {\tan ^{ - 1} \frac{3}{{k^2  + 3k + 1 }}}  = \frac\pi 2
\end{equation}
and
\begin{equation}
\sum_{k = 1}^\infty  {\tan ^{ - 1} \frac{{4k}}{{(2k^2  - 1)^2 }}}  = \frac{\pi }{2}\,,
\end{equation}
and infinite sums of products of arctangents, such as
\begin{equation}\label{equ.mvf2qbz}
\sum_{k = 1}^\infty  {\tan ^{ - 1} \frac{2}{{k^2 }}\tan ^{ - 1} \frac{1}{k}}  = \frac{{\pi ^2 }}{8}
\end{equation}
and
\begin{equation}\label{equ.u0196cc}
\sum_{k = 1}^\infty  {\tan ^{ - 1} \frac{{6k}}{{k^4  - 7k^2  + 2}}\tan ^{ - 1} \frac{1}{{k^2  - k - 1}}} \tan ^{ - 1} \frac{1}{{k^2  + k - 1}} = \frac{{\pi ^3 }}{{64}}\,,
\end{equation}
and their generalizations.
The well-known identity
\[
\sum_{k = 1}^\infty  {\tan ^{ - 1} \frac{1}{{k^2  + k + 1 }}}  = \frac\pi 4
\]
and the presumably new identity~\eqref{equ.nekey4x} are both special cases of the following identity (section~\ref{sec.swgp99m}, identity~\eqref{equ.l11ss5w}):
\[
\sum_{k = 1}^\infty  {\tan ^{ - 1} \frac{{q}}{{k^2  + qk + 1 }}}  = \sum_{k = 1}^q {\tan ^{ - 1} \frac{1 }{k}}\,,
\]
being evaluations at $q=1$ and at $q=3$, respectively.

\bigskip
Identity~\eqref{equ.mvf2qbz} is obtained at $q=1$ from the following identity (identity~\eqref{equ.vs31gxz} of section~\ref{sec.iga29qz}):
\[
\begin{split}
&\sum_{k = 1}^\infty  {\tan ^{ - 1} \frac{{2q^2 }}{{(k + q - 1)^2 }}\tan ^{ - 1} \frac{q}{{k + q - 1}}}\\
&\qquad = \frac{{\pi ^2 }}{8} + \sum_{k = 2}^q {\tan ^{ - 1} \frac{q}{{k - 1}}\tan ^{ - 1} \frac{q}{{k + q - 1}}}\,.
\end{split}
\]
Identity~\eqref{equ.u0196cc} comes from identity~\eqref{equ.aixbx8z} of section~\ref{sec.ewieaba}, namely,
\[
\sum_{k = 1}^\infty  {\tan ^{ - 1} \frac{{6\alpha k}}{{k^4  -7k^2  + \alpha ^2  + 1}}\tan ^{ - 1} \frac{\alpha }{{k^2  - k - 1}}\tan ^{ - 1} \frac{\alpha }{{k^2  + k - 1}}}  = \left( {\tan ^{ - 1} \alpha } \right)^3\,.
\]
\begin{lemma}\label{L1}
Let $\alpha$ be a real number, let $m$ and $q$ be positive integers and let $\{ f(k)\} _{k = 1}^\infty  $ be a real positive non-decreasing sequence such that \mbox{$\lim_{k\to\infty}f(k)=\infty$}, then 
\[
\begin{split}
&\sum_{k = 1}^\infty  {\tan ^{ - 1} \frac{{\alpha (f(k + mq) - f(k))}}{{f(k)f(k + mq) + \alpha ^2 }}\prod_{j = 1}^{m - 1} {\tan ^{ - 1} \frac{\alpha }{{f(k + jq)}}} }\\
&\qquad\qquad= \sum_{k = 1}^q {\prod_{j = 0}^{m - 1} {\tan ^{ - 1} \frac{\alpha }{{f(k + jq)}}} }\,.
\end{split}
\]

\end{lemma}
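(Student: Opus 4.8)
The plan is to introduce the shorthand $a_k := \tan^{-1}(\alpha/f(k))$ and to recognize the entire left-hand side as a telescoping series with step $q$. The first step is to simplify the leading arctangent factor by means of the subtraction formula $\tan^{-1}u - \tan^{-1}v = \tan^{-1}\frac{u-v}{1+uv}$, valid whenever $1+uv>0$. Taking $u=\alpha/f(k)$ and $v=\alpha/f(k+mq)$, the product $uv=\alpha^2/(f(k)f(k+mq))$ is nonnegative because $f$ is positive, so $1+uv>0$ and no additive $\pm\pi$ correction is needed; a direct computation then shows that $(u-v)/(1+uv)$ equals the argument $\alpha(f(k+mq)-f(k))/(f(k)f(k+mq)+\alpha^2)$ appearing in the lemma. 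Hence the leading factor is exactly $a_k - a_{k+mq}$.

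The second step is to expand the summand. Writing $P_k := \prod_{j=0}^{m-1} a_{k+jq}$, I would distribute
\[
(a_k - a_{k+mq})\prod_{j=1}^{m-1}a_{k+jq} = a_k\prod_{j=1}^{m-1}a_{k+jq} - a_{k+mq}\prod_{j=1}^{m-1}a_{k+jq} = P_k - P_{k+q},
\]
where the last equality uses the reindexing $P_{k+q}=\prod_{j=0}^{m-1}a_{k+(j+1)q}=\prod_{j=1}^{m}a_{k+jq}$. Thus the summand collapses to the difference $P_k - P_{k+q}$.

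The third step is to sum the telescoping series. For the partial sums one has, for $N\ge q$,
\[
\sum_{k=1}^{N}(P_k - P_{k+q}) = \sum_{k=1}^{N}P_k - \sum_{k=1+q}^{N+q}P_k = \sum_{k=1}^{q}P_k - \sum_{k=N+1}^{N+q}P_k.
\]
Since $\lim_{k\to\infty}f(k)=\infty$, each $a_k=\tan^{-1}(\alpha/f(k))\to 0$, so every $P_k\to 0$ and the trailing block $\sum_{k=N+1}^{N+q}P_k$ (a fixed number $q$ of terms) tends to $0$. Letting $N\to\infty$ yields both convergence of the series and the value $\sum_{k=1}^{q}P_k=\sum_{k=1}^{q}\prod_{j=0}^{m-1}\tan^{-1}(\alpha/f(k+jq))$, which is precisely the right-hand side.

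The conceptual content is entirely in the first step; once the leading factor is identified with $a_k-a_{k+mq}$, the rest is a step-$q$ telescoping whose only subtlety is keeping track of which boundary terms survive. I expect the main point requiring care to be the justification that the arctangent subtraction formula applies without an additive multiple of $\pi$, which is guaranteed here by the positivity of $f$ forcing $1+uv>0$; the convergence and the vanishing of the tail follow immediately from $f(k)\to\infty$.
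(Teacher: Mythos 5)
Your proof is correct and follows essentially the same route as the paper: the leading factor is rewritten via the arctangent subtraction formula (the paper's identity \eqref{equ.ie9kz2w}, applicable here since $\alpha^2/(f(k)f(k+mq))\ge 0$), and the sum then telescopes with step $q$. The only difference is that you carry out the step-$q$ telescoping and the tail estimate explicitly, whereas the paper delegates that part to identity~(2.1) of~\cite{adegoke}; your version is self-contained but not a different argument.
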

\begin{lemma}\label{LP}
Let $\alpha$ be a real number, let $m$ and $q$ be positive integers and let $\{ f(k)\} _{k = 1}^\infty  $ be a real positive non-decreasing sequence such that \mbox{$\lim_{k\to\infty}f(k)=\infty$}, then 
\[
\begin{split}
&\sum_{k = 1}^\infty  {\tan ^{ - 1} \frac{{\alpha (f(k + mq) - f(k))}}{{f(k)f(k + mq) + \alpha ^2 }}\tan ^{ - 1} \frac{{\alpha (f(k + mq) + f(k))}}{{f(k)f(k + mq) - \alpha ^2 }}\prod_{j = 1}^{m - 1} {\left( {\tan ^{ - 1} \frac{\alpha }{{f(k + jq)}}} \right)^2 } }\\
&\qquad\qquad= \sum_{k = 1}^q {\prod_{j = 0}^{m - 1} {\left( {\tan ^{ - 1} \frac{\alpha }{{f(k + jq)}}} \right)^2 } }\,.
\end{split}
\]

\end{lemma}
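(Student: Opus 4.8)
The plan is to follow the same telescoping strategy that underlies Lemma~\ref{L1}, but exploiting a difference-of-squares factorisation in place of the single subtraction used there. For brevity write $a_k = \tan^{-1}\frac{\alpha}{f(k)}$, so that the right-hand side is precisely $\sum_{k=1}^q \prod_{j=0}^{m-1} a_{k+jq}^2$. The whole proof then hinges on recognising the two ``head'' arctangents in each summand as $a_k - a_{k+mq}$ and $a_k + a_{k+mq}$, whose product is a difference of squares.

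First I would apply the arctangent subtraction and addition formulas. The subtraction formula gives
\[
a_k - a_{k+mq} = \tan^{-1}\frac{\frac{\alpha}{f(k)} - \frac{\alpha}{f(k+mq)}}{1 + \frac{\alpha^2}{f(k)f(k+mq)}} = \tan^{-1}\frac{\alpha(f(k+mq) - f(k))}{f(k)f(k+mq) + \alpha^2},
\]
which is exactly the first factor in the statement, while the addition formula gives
\[
a_k + a_{k+mq} = \tan^{-1}\frac{\alpha(f(k+mq) + f(k))}{f(k)f(k+mq) - \alpha^2},
\]
the second factor. Multiplying and using $(a_k - a_{k+mq})(a_k + a_{k+mq}) = a_k^2 - a_{k+mq}^2$ collapses the head of each summand into a single squared difference.

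Next I would fold in the remaining factor $\prod_{j=1}^{m-1} a_{k+jq}^2$. Setting $P_k = \prod_{j=0}^{m-1} a_{k+jq}^2$, the $k$-th summand becomes
\[
\left(a_k^2 - a_{k+mq}^2\right)\prod_{j=1}^{m-1} a_{k+jq}^2 = \prod_{j=0}^{m-1} a_{k+jq}^2 - \prod_{j=1}^{m} a_{k+jq}^2 = P_k - P_{k+q},
\]
since the reindexing $j \mapsto j-1$ turns $\prod_{j=1}^{m} a_{k+jq}^2$ into $P_{k+q}$. The series is therefore a telescoping sum with step $q$, and the partial sum reduces, for $N > q$, to $\sum_{k=1}^N (P_k - P_{k+q}) = \sum_{k=1}^q P_k - \sum_{k=N+1}^{N+q} P_k$. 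Because $f$ is non-decreasing with $f(k) \to \infty$, each $a_k = \tan^{-1}\frac{\alpha}{f(k)} \to 0$, so $P_k \to 0$ and the trailing block vanishes as $N \to \infty$, leaving exactly $\sum_{k=1}^q P_k$, the claimed right-hand side.

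The main obstacle is not the telescoping but the branch of the inverse tangent in the addition step: the identity $a_k + a_{k+mq} = \tan^{-1}\frac{\alpha(f(k+mq)+f(k))}{f(k)f(k+mq) - \alpha^2}$ holds without a $\pi$-correction only while $\frac{\alpha^2}{f(k)f(k+mq)} < 1$, i.e.\ when $f(k)f(k+mq) > \alpha^2$. The subtraction formula used for the first factor carries no such restriction, since the relevant product $\frac{\alpha^2}{f(k)f(k+mq)}$ is nonnegative and hence always exceeds $-1$; the delicacy is thus concentrated entirely in the addition formula. As $f$ is non-decreasing and unbounded, this condition can fail only for finitely many small $k$, so I would record the hypothesis $f(1)f(1+mq) > \alpha^2$ (equivalently, that every denominator $f(k)f(k+mq) - \alpha^2$ stays positive) as the clean sufficient condition guaranteeing that each summand sits on the principal branch exactly as written.
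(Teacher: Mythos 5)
Your proof is correct and follows essentially the same route as the paper: the paper disposes of Lemma~\ref{LP} by citing the telescoping identity (2.1) of~\cite{adegoke} applied to $\bigl(\tan^{-1}(\alpha/f(k))\bigr)^2$ together with the addition and subtraction formulas~\eqref{equ.ie9kz2w} and~\eqref{equ.zkvzqba}, which is exactly the difference-of-squares telescoping you carry out explicitly. Your closing remark about the branch condition $f(k)f(k+mq)>\alpha^2$ is a genuine subtlety that the paper handles only implicitly via the side condition attached to~\eqref{equ.zkvzqba}.
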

Lemma~\ref{L1} and Lemma~\ref{LP} follow directly from identity~(2.1) of~\cite{adegoke} and the trigonometric identities
\begin{equation}\label{equ.ie9kz2w}
\tan ^{ - 1} \frac{\lambda }{x} - \tan ^{ - 1} \frac{\lambda }{y} = \tan ^{ - 1} \frac{{\lambda (y - x)}}{{xy + \lambda ^2 }},\quad\mbox{$\lambda^2/xy\ge -1$}\,,
\end{equation}
and
\begin{equation}\label{equ.zkvzqba}
\tan ^{ - 1} \frac{\lambda }{x} + \tan ^{ - 1} \frac{\lambda }{y} = \tan ^{ - 1} \frac{{\lambda (y + x)}}{{xy - \lambda ^2 }}\,,\quad\mbox{$\lambda^2/xy< 1$}\,.
\end{equation}
Throughout this paper, the principal value of the arctangent function is assumed.

\bigskip

We shall adopt the following conventions for empty sums and empty products:
\[
\sum_{k = 1}^0 {f(k)}  = 0,\quad\prod_{k = 1}^0 {f(k)}  = 1\,.
\]
\section{Main Results}
\begin{thm}\label{thm.sveqk2u}
If $\alpha$ and $\beta\ge -1$ are real numbers and $m$ and $q$ are positive integers, then
\[
\begin{split}
&\sum_{k = 1}^\infty  {\tan ^{ - 1} \frac{{\alpha mq}}{{k^2  + (2\beta + mq)k + \beta(\beta+mq) + \alpha ^2 }}\prod_{j = 1}^{m - 1} {\tan ^{ - 1} \frac{\alpha }{{k + jq + \beta}}} }\\
&\qquad\qquad\qquad= \sum_{k = 1}^q {\prod_{j = 0}^{m - 1} {\tan ^{ - 1} \frac{\alpha }{{k + jq + \beta}}} }\,.
\end{split}
\]

\end{thm}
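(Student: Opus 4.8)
The plan is to recognize the theorem as the specialization of Lemma~\ref{L1} obtained by choosing the sequence $f(k) = k + \beta$. With this choice every ingredient of the theorem matches a corresponding ingredient of the lemma, so the bulk of the work is a short verification rather than a fresh argument.

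First I would record the elementary consequences of the choice $f(k)=k+\beta$. Since $f(k+jq) = k + jq + \beta$, the product $\prod_{j=1}^{m-1}\tan^{-1}\frac{\alpha}{f(k+jq)}$ of the lemma becomes $\prod_{j=1}^{m-1}\tan^{-1}\frac{\alpha}{k+jq+\beta}$ of the theorem, and the right-hand side $\sum_{k=1}^q\prod_{j=0}^{m-1}\tan^{-1}\frac{\alpha}{f(k+jq)}$ converts verbatim into the right-hand side of the theorem. It then remains only to check the leading arctangent factor. Here I would compute the two quantities appearing in Lemma~\ref{L1}: the increment $f(k+mq)-f(k) = mq$, so that $\alpha\bigl(f(k+mq)-f(k)\bigr) = \alpha mq$, and the product $f(k)f(k+mq) = (k+\beta)(k+mq+\beta) = k^2 + (2\beta + mq)k + \beta(\beta + mq)$. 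Adding $\alpha^2$ reproduces exactly the denominator $k^2 + (2\beta+mq)k+\beta(\beta+mq)+\alpha^2$ of the theorem, so the leading factor of the lemma coincides with that of the theorem.

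Next I would verify the hypotheses of Lemma~\ref{L1} for $f(k)=k+\beta$. This sequence is strictly increasing and satisfies $\lim_{k\to\infty}f(k)=\infty$ for every real $\beta$, while positivity $f(k)=k+\beta>0$ for all $k\ge 1$ holds precisely when $\beta>-1$, the generic case of the hypothesis $\beta\ge -1$. With the hypotheses in force, Lemma~\ref{L1} applies directly and yields the theorem for all $\beta>-1$.

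The main obstacle is the boundary value $\beta=-1$, where $f(1)=0$ violates the strict-positivity requirement of Lemma~\ref{L1} and the factor $\tan^{-1}\frac{\alpha}{k+jq+\beta}$ with $k=1$, $j=0$ degenerates to $\tan^{-1}\frac{\alpha}{0}$. I would dispose of this case by a continuity argument: both sides of the claimed identity are continuous functions of $\beta$ on $[-1,\infty)$, once $\tan^{-1}\frac{\alpha}{0}$ is read as its principal-value limit $\operatorname{sgn}(\alpha)\,\pi/2$, so letting $\beta\to -1^+$ in the already-established identity extends it to $\beta=-1$. Equivalently, one may re-examine the telescoping underlying Lemma~\ref{L1} directly at $\beta=-1$ and confirm that the single vanishing denominator causes no breakdown.
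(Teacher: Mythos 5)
Your proposal is correct and is exactly the paper's own proof: the paper establishes Theorem~\ref{thm.sveqk2u} by the single line ``Use $f(k)=k+\beta$ in Lemma~\ref{L1},'' and your verification of the substitution (the increment $mq$ and the product $(k+\beta)(k+mq+\beta)$) fills in the routine details. Your extra care at the boundary $\beta=-1$, via continuity or direct inspection of the telescoping, is a reasonable supplement that the paper leaves implicit.
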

In particular,
\begin{equation}\label{equ.hh4gkqe}
\sum_{k = 1}^\infty  {\tan ^{ - 1} \frac{{\alpha q}}{{k^2  + (2\beta  + q)k + q\beta  + \alpha ^2  + \beta ^2 }}}  = \sum_{k = 1}^q {\tan ^{ - 1} \frac{\alpha }{{k + \beta }}}\,,
\end{equation}
\begin{equation}\label{equ.pstkj0j}
\begin{split}
&\sum_{k = 1}^\infty  {\tan ^{ - 1} \frac{{2\alpha q}}{{k^2  + 2(\beta  + q)k + 2q\beta  + \alpha ^2  + \beta ^2 }}\tan ^{ - 1} \frac{\alpha }{{k + q + \beta }}}\\ 
&\qquad= \sum_{k = 1}^q {\tan ^{ - 1} \frac{\alpha }{{k + \beta }}\tan ^{ - 1} \frac{\alpha }{{k + q + \beta }}} 
\end{split}
\end{equation}
and
\begin{equation}\label{equ.swkgzwc}
\begin{split}
&\sum_{k = 1}^\infty  {\tan ^{ - 1} \frac{{3\alpha q}}{{k^2  + (2\beta  + 3q)k + 3q\beta  + \alpha ^2  + \beta ^2 }}\tan ^{ - 1} \frac{\alpha }{{k + q + \beta }}} \tan ^{ - 1} \frac{\alpha }{{k + 2q + \beta }}\\
&\qquad= \sum_{k = 1}^q {\tan ^{ - 1} \frac{\alpha }{{k + \beta }}\tan ^{ - 1} \frac{\alpha }{{k + q + \beta }}} \tan ^{ - 1} \frac{\alpha }{{k + 2q + \beta }}\,.
\end{split}
\end{equation}
\begin{proof}
Use $f(k)=k+\beta$ in Lemma~\ref{L1}.
\end{proof}
\begin{cor}\label{thm.pori4ri}
If $q$ and $m$ are positive integers and $\beta\ge -1$ is a real number, then
\[
\sum_{k = 1}^\infty  {\tan ^{ - 1} \frac{{2m^2 q^2 }}{{(2k + 2\beta + mq)^2 }}\prod_{j = 1}^{m - 1} {\tan ^{ - 1} \frac{{mq}}{{2(k + \beta + jq)}}} }  = \sum_{k = 1}^q {\prod_{j = 0}^{m - 1} {\tan ^{ - 1} \frac{{mq}}{{2(k + \beta + jq)}}} }\,. 
\]
\end{cor}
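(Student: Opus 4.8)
The plan is to derive Corollary~\ref{thm.pori4ri} as a direct specialization of Theorem~\ref{thm.sveqk2u}. The corollary has no free $\alpha$ parameter and no additive constant $\alpha^2+\beta^2$-type term inside the arctangent arguments, which strongly suggests that we should set the parameter $\alpha$ in the theorem equal to some multiple of $mq$ so that the quadratic in $k$ inside the denominator of the first factor becomes a perfect square. First I would substitute $\alpha = mq/2$ into the statement of Theorem~\ref{thm.sveqk2u} and verify that both sides transform into exactly the corresponding expressions in the corollary.

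The key computation is on the first factor of the left-hand side. With the substitution $\alpha=mq/2$, the numerator $\alpha mq$ becomes $m^2q^2/2$, and I claim the denominator $k^2+(2\beta+mq)k+\beta(\beta+mq)+\alpha^2$ becomes a perfect square. Completing the square, I would write the denominator as $\bigl(k+\beta+\tfrac{mq}{2}\bigr)^2$ and check that the cross terms and constant match: expanding gives $k^2+(2\beta+mq)k+(\beta+\tfrac{mq}{2})^2 = k^2+(2\beta+mq)k+\beta^2+\beta mq+\tfrac{m^2q^2}{4}$, which matches $k^2+(2\beta+mq)k+\beta(\beta+mq)+\alpha^2$ precisely when $\alpha^2=\tfrac{m^2q^2}{4}$, i.e.\ $\alpha=mq/2$. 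Then the first factor is $\tan^{-1}\dfrac{m^2q^2/2}{(k+\beta+mq/2)^2}$, and multiplying numerator and denominator of the inner fraction by $4$ turns this into $\tan^{-1}\dfrac{2m^2q^2}{(2k+2\beta+mq)^2}$, exactly as required. The remaining factors $\tan^{-1}\dfrac{\alpha}{k+jq+\beta}$ on both sides become $\tan^{-1}\dfrac{mq/2}{k+\beta+jq}=\tan^{-1}\dfrac{mq}{2(k+\beta+jq)}$, matching the products on each side of the corollary.

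The only point that needs genuine care is the hypothesis check, not the algebra. Theorem~\ref{thm.sveqk2u} requires $\alpha$ to be an arbitrary real number and $\beta\ge-1$, and the corollary inherits the same constraint $\beta\ge-1$; since $m$ and $q$ are positive integers, $\alpha=mq/2$ is automatically a valid (positive) real number, so no new restriction arises. The hardest part, such as it is, will simply be confirming the perfect-square identity in the denominator without sign errors, but this is routine once the value $\alpha=mq/2$ is identified. Thus the proof reduces to a single substitution: set $\alpha=mq/2$ and $f(k)=k+\beta$ (equivalently, invoke Theorem~\ref{thm.sveqk2u} directly with $\alpha=mq/2$), and the claimed identity follows.
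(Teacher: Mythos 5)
Your proof is correct and is exactly the paper's argument: the paper proves Corollary~\ref{thm.pori4ri} by setting $\alpha=mq/2$ in Theorem~\ref{thm.sveqk2u}, and your completion of the square in the denominator is the (routine) verification the paper leaves implicit. Nothing further is needed.
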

In particular,
\begin{equation}\label{equ.ah4yf0f}
\sum_{k = 1}^\infty  {\tan ^{ - 1} \frac{{2q^2 }}{{(2k + 2\beta + q)^2 }}}  = \sum_{k = 1}^q {\tan ^{ - 1} \frac{q}{{2(k + \beta)}}}\,.
\end{equation}
\begin{proof}
Set $\alpha=mq/2$ in the identity of Theorem~\ref{thm.sveqk2u}.
\end{proof}
\begin{thm}\label{thm.slsjsoq}
If $\alpha$ and $\beta$ are real numbers and $m$ and $q$ are positive integers, then
\[
\begin{split}
&\sum_{k = 1}^\infty  {\tan ^{ - 1} \frac{{2\alpha mqk}}{{k^4  - (m^2 q^2 - 2\beta)k^2  + \beta ^2  + \alpha ^2 }}\prod_{j = 1}^{m - 1} {\tan ^{ - 1} \frac{\alpha }{{k^2  + (2j-m)qk -q^2j(m-j)  + \beta }}} } \\
&\qquad= \sum_{k = 1}^q {\prod_{j = 0}^{m - 1} {\tan ^{ - 1} \frac{\alpha }{{k^2  + (2j-m)qk - q^2j(m-j)  + \beta }}} }\,. 
\end{split}
\]

\end{thm}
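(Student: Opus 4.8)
The plan is to read this theorem as the specialization of Lemma~\ref{L1} coming from a \emph{quadratic} profile $f$, exactly as Theorem~\ref{thm.sveqk2u} came from the linear profile $f(k)=k+\beta$. The profile is forced by the product factors: matching $\tan^{-1}\frac{\alpha}{k^2+(2j-m)qk-q^2j(m-j)+\beta}$ against the generic factor $\tan^{-1}\frac{\alpha}{f(k+jq)}$, I would set $n=k+jq$ and expand; the $j$-dependence cancels and one is left with
\[
f(k)=k^2-mqk+\beta.
\]
Thus the whole statement should reduce to inserting this $f$ into Lemma~\ref{L1}.

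The second step is to verify the three matchings, which is mechanical. A short computation gives $f(k+mq)=k^2+mqk+\beta$, so $f(k+mq)-f(k)=2mqk$ reproduces the numerator $2\alpha mqk$. For the denominator I would use the difference of squares
\[
f(k)f(k+mq)=(k^2+\beta)^2-(mqk)^2=k^4-(m^2q^2-2\beta)k^2+\beta^2,
\]
so that $f(k)f(k+mq)+\alpha^2$ is exactly $k^4-(m^2q^2-2\beta)k^2+\beta^2+\alpha^2$. Finally, substituting $n\mapsto k+jq$ in $f$ and collecting terms recovers every product factor verbatim, and the right-hand side of Lemma~\ref{L1} is precisely $\sum_{k=1}^q\prod_{j=0}^{m-1}\tan^{-1}\frac{\alpha}{f(k+jq)}$.

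The genuine obstacle is that this quadratic $f$ does \emph{not} meet the hypotheses of Lemma~\ref{L1}: its vertex sits at $k=mq/2$, so for $mq\ge4$ the sequence is strictly decreasing on the initial block $1\le k\le mq/2$, and for many $\beta$ it is negative there as well (already in the case $m=3$, $q=1$, $\beta=1$ underlying~\eqref{equ.aixbx8z} one has $f(1)=f(2)=-1$). Hence Lemma~\ref{L1} cannot be invoked verbatim, and the real work is to isolate what its telescoping actually needs. Writing $P_k=\prod_{j=0}^{m-1}\tan^{-1}\frac{\alpha}{f(k+jq)}$, the summand on the left equals $P_k-P_{k+q}$ as soon as the arctangent subtraction~\eqref{equ.ie9kz2w} applies with $x=f(k)$, $y=f(k+mq)$, $\lambda=\alpha$, while the sum collapses to $\sum_{k=1}^q P_k$ because $f(k)\to\infty$ forces the tail block $\sum_{k=N+1}^{N+q}P_k\to0$. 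Monotonicity and positivity are therefore inessential; what is indispensable is the domain condition $\alpha^2/\big(f(k)f(k+mq)\big)\ge-1$ of~\eqref{equ.ie9kz2w}, which is exactly the nonnegativity of the quartic denominator $k^4-(m^2q^2-2\beta)k^2+\beta^2+\alpha^2$ for every $k\ge1$. I would present the identity under this (implicitly assumed) constraint on $\alpha$ and $\beta$, the one place where anything beyond bookkeeping is required.
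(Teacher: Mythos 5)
Your proposal takes exactly the paper's route: the paper's entire proof of Theorem~\ref{thm.slsjsoq} is the single line ``Use $f(k)=k^2-mqk+\beta$ in Lemma~\ref{L1},'' which is the substitution you identify and verify. Your additional observation --- that this quadratic $f$ violates the positivity and monotonicity hypotheses of Lemma~\ref{L1} (e.g.\ $f(1)=f(2)=-1$ when $m=3$, $q=1$, $\beta=1$), and that the telescoping argument really only needs $f(k)\to\infty$ together with the domain condition $\alpha^2/\bigl(f(k)f(k+mq)\bigr)\ge-1$ of~\eqref{equ.ie9kz2w} --- is a correct refinement of a point the paper silently glosses over, not a divergence from its method.
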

In particular,
\begin{equation}\label{equ.fl6y4rm}
\sum_{k = 1}^\infty  {\tan ^{ - 1} \frac{{2\alpha qk}}{{k^4  + (2\beta  - q^2 )k^2  + \beta ^2  + \alpha ^2 }}}  = \sum_{k = 1}^q {\tan ^{ - 1} \frac{\alpha }{{k^2  - qk + \beta }}}\,.
\end{equation}
\begin{proof}
Use $f(k)=k^2-mqk+\beta$ in Lemma~\ref{L1}.
\end{proof}
\begin{thm}\label{thm.ivbxym1}
If $\alpha$ and $\beta$ are real numbers and $m$ and $q$ are positive integers, then
\[
\begin{split}
&\sum_{k = 1}^\infty  {\tan ^{ - 1} \frac{{\alpha mq}}{{k^2  + (2\beta  + mq)k + \beta (\beta  + mq) + \alpha ^2 }}\tan ^{ - 1} \frac{{\alpha (2k + 2\beta  + mq)}}{{k^2  + (2\beta  + mq)k + \beta (\beta  + mq) - \alpha ^2 }} \times }
 \\
&\qquad\qquad \times \prod_{j = 1}^{m - 1} {\left( {\tan ^{ - 1} \frac{\alpha }{{k + jq + \beta }}} \right)^2 }  = \sum_{k = 1}^q {\prod_{j = 0}^{m - 1} {\left( {\tan ^{ - 1} \frac{\alpha }{{k + jq + \beta }}} \right)^2 } }\,. 
\end{split}
\]

\end{thm}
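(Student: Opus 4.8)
The plan is to obtain this identity by the same mechanism that produced Theorem~\ref{thm.sveqk2u}, but now starting from the squared telescoping packaged in Lemma~\ref{LP} rather than from Lemma~\ref{L1}. Concretely, I would set $f(k) = k + \beta$ in Lemma~\ref{LP}. Because Lemma~\ref{LP} already encodes the collapse of the sum of squared arctangents into a finite boundary sum, no fresh summation argument is required: the entire proof reduces to substituting this $f$ and simplifying the resulting arguments.

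The substitution step is purely algebraic. With $f(k) = k + \beta$ I would record
\[
f(k+mq) - f(k) = mq, \qquad f(k+mq) + f(k) = 2k + 2\beta + mq,
\]
together with the expansion
\[
f(k)f(k+mq) = (k+\beta)(k+mq+\beta) = k^2 + (2\beta + mq)k + \beta(\beta + mq).
\]
Feeding these into the two leading arctangent arguments of Lemma~\ref{LP} reproduces exactly the factors
\[
\tan^{-1} \frac{\alpha mq}{k^2 + (2\beta + mq)k + \beta(\beta + mq) + \alpha^2}
\quad\text{and}\quad
\tan^{-1} \frac{\alpha(2k + 2\beta + mq)}{k^2 + (2\beta + mq)k + \beta(\beta + mq) - \alpha^2},
\]
while each remaining factor $\tan^{-1}(\alpha/f(k+jq))$ becomes $\tan^{-1}(\alpha/(k+jq+\beta))$; the right-hand boundary sum transforms identically. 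This matches the claimed identity term by term.

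The only delicate point is confirming that the hypotheses of Lemma~\ref{LP} --- and hence of the underlying subtraction and addition formulas \eqref{equ.ie9kz2w} and \eqref{equ.zkvzqba} --- hold for $f(k) = k+\beta$. Monotonicity and divergence are immediate, and positivity holds once $k + \beta > 0$, so I would take $\beta$ in a range guaranteeing $f(1) > 0$ and treat any boundary case separately. I expect this hypothesis-checking to be the main obstacle rather than the algebra, since the addition formula \eqref{equ.zkvzqba} carries the restriction $\alpha^2/\bigl(f(k)f(k+mq)\bigr) < 1$; one must verify that this is compatible with the principal-value combination used inside Lemma~\ref{LP} for every index $k \ge 1$ in the sum, noting that it certainly holds for all large $k$ because the product $f(k)f(k+mq)$ grows without bound.
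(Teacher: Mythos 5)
Your proposal is correct and is exactly the paper's own proof: the paper simply states ``Use $f(k)=k+\beta$ in Lemma~\ref{LP},'' which is the substitution you carry out and verify algebraically. Your additional attention to the positivity and principal-value hypotheses is more careful than the paper, but it does not change the route.
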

\begin{proof}
Use $f(k)=k+\beta$ in Lemma~\ref{LP}.
\end{proof}
In particular,
\begin{equation}\label{equ.ljah2jn}
\begin{split}
&\sum_{k = 1}^\infty  {\tan ^{ - 1} \frac{{\alpha q}}{{k^2  + (2\beta  + q)k + \beta (\beta  + q) + \alpha ^2 }}\tan ^{ - 1} \frac{{\alpha (2k + 2\beta  + q)}}{{k^2  + (2\beta  + q)k + \beta (\beta  + q) - \alpha ^2 }}}\\
&\qquad\qquad= \sum_{k = 1}^q {\left( {\tan ^{ - 1} \frac{\alpha }{{k + jq + \beta }}} \right)^2 }\,.
\end{split}
\end{equation}
\begin{thm}\label{thm.lqwviov}
If $\alpha$ and $\beta$ are real numbers and $m$ and $q$ are positive integers, then
\[
\begin{split}
&\sum_{k = 1}^\infty  {\tan ^{ - 1} \frac{{2\alpha mqk}}{{k^4  + (2\beta  - m^2 q^2 )k^2  + \alpha ^2  + \beta ^2 }}\tan ^{ - 1} \frac{{2\alpha (k^2  + \beta )}}{{k^4  + (2\beta  - m^2 q^2 )k^2  - \alpha ^2  + \beta ^2 }}}  \times\\
&\qquad\times \prod_{j = 1}^{m - 1} {\left( {\tan ^{ - 1} \frac{\alpha }{{k^2  - qk(m - 2j) - q^2 j(m - j) + \beta }}} \right)^2 }\\
&\qquad\qquad = \sum_{k = 1}^q {\prod_{j = 0}^{m - 1} {\left( {\tan ^{ - 1} \frac{\alpha }{{k^2  - qk(m - 2j) - q^2 j(m - j) + \beta }}} \right)^2 } }\,.
\end{split}
\]

\end{thm}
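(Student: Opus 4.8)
The plan is to recognize this identity as the squared analogue of Theorem~\ref{thm.slsjsoq}, in exactly the same way that Theorem~\ref{thm.ivbxym1} is the squared analogue of Theorem~\ref{thm.sveqk2u}. Since Theorem~\ref{thm.slsjsoq} is obtained from Lemma~\ref{L1} with the substitution $f(k)=k^2-mqk+\beta$, I would apply Lemma~\ref{LP} with this same choice of $f$.

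First I would record the elementary consequences of the substitution. A direct expansion gives
\[
f(k+jq)=k^2-qk(m-2j)-q^2j(m-j)+\beta,
\]
which is precisely the denominator appearing in the product terms on both sides of the claim; the cases $j=0$ and $j=m$ give $f(k)=k^2-mqk+\beta$ and $f(k+mq)=k^2+mqk+\beta$. Hence
\[
f(k+mq)-f(k)=2mqk,\qquad f(k+mq)+f(k)=2(k^2+\beta),
\]
and, since $f(k)f(k+mq)=(k^2+\beta)^2-m^2q^2k^2$,
\[
f(k)f(k+mq)=k^4+(2\beta-m^2q^2)k^2+\beta^2.
\]
Substituting these four quantities into the two leading arctangent factors of Lemma~\ref{LP} produces
\[
\tan^{-1}\frac{2\alpha mqk}{k^4+(2\beta-m^2q^2)k^2+\alpha^2+\beta^2}\quad\text{and}\quad\tan^{-1}\frac{2\alpha(k^2+\beta)}{k^4+(2\beta-m^2q^2)k^2-\alpha^2+\beta^2},
\]
which are exactly the two leading factors of the asserted left-hand side, while the squared product $\prod_{j=1}^{m-1}(\tan^{-1}\frac{\alpha}{f(k+jq)})^2$ and the finite right-hand sum transcribe verbatim. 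This completes the identification of both sides.

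The only point requiring genuine care is the verification of the hypotheses of Lemma~\ref{LP}, namely that $f(k)=k^2-mqk+\beta$ be positive and non-decreasing with $f(k)\to\infty$. I expect this to be the main obstacle. The divergence is immediate, but the quadratic attains its minimum at $k=mq/2$ with value $\beta-m^2q^2/4$, so for general real $\beta$ it is only eventually monotone and eventually positive. I would handle this exactly as in Theorem~\ref{thm.slsjsoq}: since Lemma~\ref{LP} ultimately encodes the telescoping of identity~(2.1) of~\cite{adegoke} through the arctangent addition formulas~\eqref{equ.ie9kz2w} and~\eqref{equ.zkvzqba}, it suffices to confirm that the associated principal-value conditions $\lambda^2/xy\ge-1$ and $\lambda^2/xy<1$ hold for the relevant shifted products, so that the finitely many initial terms are correctly absorbed into the right-hand side.
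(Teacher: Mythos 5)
Your proposal is correct and follows exactly the paper's own (one-line) proof: substitute $f(k)=k^2-mqk+\beta$ into Lemma~\ref{LP}, and your algebraic verification of $f(k+mq)\mp f(k)$ and $f(k)f(k+mq)$ matches the stated identity term by term. Your remark about the hypotheses of Lemma~\ref{LP} (positivity and monotonicity of the quadratic fail for small $k$ when $\beta<m^2q^2/4$) is a genuine point of care that the paper itself silently skips, so you have if anything been more thorough than the source.
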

\begin{proof}
Use $f(k)=k^2-mqk+\beta$ in Lemma~\ref{LP}.
\end{proof}
\begin{cor}
If $m$ and $q$ are positive integers, then
\[
\begin{split}
&\sum_{k = 1}^\infty  {\tan ^{ - 1} \frac{{2m^2 qk}}{{2k^4  - (mq^2  - 1)2mk^2  + m^2 }}\tan ^{ - 1} \frac{{m(2k^2  + m)}}{{2k^2 (k^2  + m - m^2 q^2 )}}}  \times \\
&\qquad\times \prod_{j = 1}^{m - 1} {\left( {\tan ^{ - 1} \frac{m}{{2k^2  - 2qk(m - 2j) - 2q^2 j(m - j) + m}}} \right)^2 }\\ 
&\qquad\qquad= \sum_{k = 1}^q {\prod_{j = 0}^{m - 1} {\left( {\tan ^{ - 1} \frac{m}{{2k^2  - 2qk(m - 2j) - 2q^2 j(m - j) + m}}} \right)^2 } }\,. 
\end{split}
\]
\end{cor}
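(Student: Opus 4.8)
The plan is to derive this Corollary as the special case $\alpha=\beta=m/2$ of Theorem~\ref{thm.lqwviov}. Under this choice every arctangent argument on both sides collapses, after clearing a factor of $\tfrac12$ from numerator and denominator, onto the corresponding argument in the Corollary, so the whole derivation reduces to three elementary algebraic checks.

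First I would handle the squared product factors, which also settles the entire right-hand side. Setting $\alpha=\beta=m/2$ in the generic factor
\[
\tan^{-1}\frac{\alpha}{k^2-qk(m-2j)-q^2 j(m-j)+\beta}
\]
and multiplying numerator and denominator by $2$ gives
\[
\tan^{-1}\frac{m}{2k^2-2qk(m-2j)-2q^2 j(m-j)+m}\,,
\]
which is precisely the factor appearing under the products in the Corollary, both for $j=1,\dots,m-1$ on the left and for $j=0,\dots,m-1$ on the right.

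Next I would simplify the two leading factors. In the first, $\alpha=\beta=m/2$ sends the numerator $2\alpha mqk$ to $m^2 qk$ and the denominator $k^4+(2\beta-m^2q^2)k^2+\alpha^2+\beta^2$ to $k^4+(m-m^2q^2)k^2+\tfrac12 m^2$; doubling top and bottom and writing $2(m-m^2q^2)=-(mq^2-1)2m$ yields the stated form $\tan^{-1}\bigl(2m^2qk/(2k^4-(mq^2-1)2mk^2+m^2)\bigr)$. In the second, the crucial point is that $\alpha^2=\beta^2$ makes the term $-\alpha^2+\beta^2$ in the denominator vanish, so $k^4+(2\beta-m^2q^2)k^2-\alpha^2+\beta^2$ becomes $k^2(k^2+m-m^2q^2)$, while the numerator $2\alpha(k^2+\beta)$ becomes $mk^2+\tfrac12 m^2$; doubling and factoring gives $\tan^{-1}\bigl(m(2k^2+m)/(2k^2(k^2+m-m^2q^2))\bigr)$, exactly as claimed.

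I expect no real obstacle here: the substitution is forced by matching the product factors, and once $\alpha=\beta=m/2$ is fixed the two leading factors follow by the routine simplifications above. The only point worth flagging is that the clean denominator $2k^2(k^2+m-m^2q^2)$ in the second factor hinges on the cancellation $-\alpha^2+\beta^2=0$, which is special to the symmetric choice $\alpha=\beta$; assembling the three simplified factors then identifies the left-hand side of Theorem~\ref{thm.lqwviov} termwise with that of the Corollary, which completes the derivation.
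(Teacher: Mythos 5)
Your proposal is correct and matches the paper's (implicit) proof: the corollary is exactly the specialization $\alpha=\beta=m/2$ of Theorem~\ref{thm.lqwviov}, in direct analogy with how Corollary~\ref{thm.pori4ri} is obtained from Theorem~\ref{thm.sveqk2u}. All three of your algebraic checks, including the key cancellation $\beta^2-\alpha^2=0$ that produces the factored denominator $2k^2(k^2+m-m^2q^2)$, verify as stated.
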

\section{Examples}
\subsection{Sums of arctangents}
\subsubsection{Examples from identity~\eqref{equ.hh4gkqe}}\label{sec.swgp99m}
Evaluating~\eqref{equ.hh4gkqe} at $\beta=-1$ gives
\begin{equation}\label{equ.tcbtsqt}
\sum_{k = 1}^\infty  {\tan ^{ - 1} \frac{{\alpha q}}{{k^2  + (q - 2)(k - 1) + \alpha ^2  - 1}}}  = \frac{\pi }{2} + \sum_{k = 2}^q {\tan ^{ - 1} \frac{\alpha }{{k - 1}}},\quad\alpha\in\R^+\,,
\end{equation}
from which we obtain
\begin{equation}\label{equ.iefhc2p}
\sum_{k = 1}^\infty  {\tan ^{ - 1} \frac{{q}}{{k^2  + (q - 2)(k - 1)}}}  = \frac{\pi }{2} + \sum_{k = 2}^q {\tan ^{ - 1} \frac{1 }{{k - 1}}}\,,
\end{equation}
\begin{equation}\label{equ.jjrqfmc}
\sum_{k = 1}^\infty  {\tan ^{ - 1} \frac{{2\alpha }}{{k^2  + \alpha ^2  - 1}}}  = \frac{\pi }{2} + \tan ^{ - 1} \alpha
\end{equation}
and
\begin{equation}\label{equ.kgx4gck}
\sum_{k = 1}^\infty  {\tan ^{ - 1} \frac{\alpha }{{k^2  - k + \alpha ^2 }}}  = \frac{\pi }{2}\,.
\end{equation}
The identity~\eqref{equ.kgx4gck} is valid for any positive real $\alpha$, and in particular evaluation at $\alpha=1$ gives
\begin{equation}\label{equ.macv3oy}
\sum_{k = 1}^\infty  {\tan ^{ - 1} \frac{1 }{{k^2  - k + 1 }}}  = \frac{\pi }{2}\,.
\end{equation}

Evaluating~\eqref{equ.jjrqfmc} at $\alpha=1$ gives the well known result,
\begin{equation}
\sum_{k = 1}^\infty  {\tan ^{ - 1} \frac{2}{{k^2 }}}  = \frac{{3\pi }}{4}\,.
\end{equation}
Putting $\beta=0$ in identity~\eqref{equ.hh4gkqe} provides the evaluation
\begin{equation}\label{equ.s0rb2zk}
\sum_{k = 1}^\infty  {\tan ^{ - 1} \frac{{\alpha q}}{{k^2  + qk + \alpha ^2 }}}  = \sum_{k = 1}^q {\tan ^{ - 1} \frac{\alpha }{k}}\,,\quad\alpha\in\R\,,
\end{equation}
from which we get
\begin{equation}\label{equ.l11ss5w}
\sum_{k = 1}^\infty  {\tan ^{ - 1} \frac{{q}}{{k^2  + qk + 1 }}}  = \sum_{k = 1}^q {\tan ^{ - 1} \frac{1 }{k}}
\end{equation}
and
\begin{equation}\label{equ.s24gqww}
\sum_{k = 1}^\infty  {\tan ^{ - 1} \frac{{\alpha}}{{k^2  + k + \alpha ^2 }}}  = \tan ^{ - 1}\alpha\,.
\end{equation}
Setting $\alpha=\tan\theta$ in identity~\eqref{equ.s24gqww} gives
\begin{equation}
\theta  = \sum_{k = 1}^\infty  {\tan ^{ - 1} \frac{{\tan \theta }}{{k^2  + k + \tan ^2 \theta }}}\,,
\end{equation}
providing an infinite arctangent sum expansion for any angle $\theta$, such that $-\pi/2<\theta<\pi/2$.

\bigskip

Upon setting $\beta=-1/2$ in identity~\eqref{equ.hh4gkqe} we obtain
\begin{equation}
\sum_{k = 1}^\infty  {\tan ^{ - 1} \frac{{4\alpha q}}{{4k^2  + 4k(q - 1) - 2q + 1 + 4\alpha ^2 }}}  = \sum_{k = 1}^q {\tan ^{ - 1} \frac{{2\alpha }}{{2k - 1}}}\,,\quad\alpha\in\R\,,
\end{equation}
from which we also obtain, at $\alpha=q/2$,
\begin{equation}
\sum_{k = 1}^\infty  {\tan ^{ - 1} \frac{{2q^2 }}{{(2k + q - 1)^2 }}}  = \sum_{k = 1}^q {\tan ^{ - 1} \frac{q}{{2k - 1}}}\,,
\end{equation}
with the special value
\begin{equation}
\sum_{k = 1}^\infty  {\tan ^{ - 1} \frac1{2k^2 }}  = \frac\pi 4\,.
\end{equation}
Choosing $\alpha=\sin\theta$, $\beta=\cos\theta$ and $q=1$ in identity~\eqref{equ.hh4gkqe} gives
\begin{equation}
\theta  = 2\sum_{k = 1}^\infty  {\tan ^{ - 1} \frac{{\sin \theta }}{{k^2  + (2\cos \theta  + 1)k + \cos \theta  + 1}}}\,,
\end{equation}
thereby providing infinite arctangent sums for angles $-\pi/2\le\theta\le\pi/2$. In particular,
\begin{equation}
\frac{\pi }{6} = \sum_{k = 1}^\infty  {\tan ^{ - 1} \frac{{\sqrt 3 }}{{2k^2  + 4k + 3}}}\,.
\end{equation}
Replacing $\beta$ with $\beta-1$ in identity~\eqref{equ.hh4gkqe} and evaluating at $q=1$ gives
\begin{equation}
\sum_{k = 1}^\infty  {\tan ^{ - 1} \frac{\alpha }{{k^2  + (2\beta  - 1)k - \beta  + \alpha ^2  + \beta ^2 }}}  = \tan ^{ - 1} \frac{\alpha }{\beta }\,,
\end{equation}
which is equivalent to identity~(3) of~\cite{glasser}.
\subsubsection{Examples from identity~\eqref{equ.ah4yf0f}}
Evaluating identity~\eqref{equ.ah4yf0f} at $\beta=0$ and at $\beta=-1$, respectively, we have
\begin{equation}
\sum_{k = 1}^\infty  {\tan ^{ - 1} \frac{{2q^2 }}{{(2k + q)^2 }}}  = \sum_{k = 1}^q {\tan ^{ - 1} \frac{q}{{2k}}}
\end{equation}
and
\begin{equation}
\sum_{k = 1}^\infty  {\tan ^{ - 1} \frac{{2q^2 }}{{(2k + q - 2)^2 }}}  = \frac{\pi }{2} + \sum_{k = 2}^q {\tan ^{ - 1} \frac{q}{{2(k - 1)}}}\,
\end{equation}
of which a special value is
\begin{equation}
\sum_{k = 1}^\infty  {\tan ^{ - 1} \frac{2}{{(2k - 1)^2 }}}  = \frac{\pi }{2}\,.
\end{equation}

\subsubsection{Examples from identity~\eqref{equ.fl6y4rm}}
Putting $\alpha=1=\beta$ in identity~\eqref{equ.fl6y4rm} we obtain
\begin{equation}
\sum_{k = 1}^\infty  {\tan ^{ - 1} \frac{{2qk}}{{k^4  + (2 - q^2 )k^2  + 2}}}  = \sum_{k = 1}^q {\tan ^{ - 1} \frac{1}{{k^2  - qk + 1}}}\,,
\end{equation}
which at $q=1$ gives
\begin{equation}
\sum_{k = 1}^\infty  {\tan ^{ - 1} \frac{{2k}}{{k^4  + k^2  + 2}}}  = \frac{\pi }{4}\,,
\end{equation}
and at $q=2$ gives
\begin{equation}
\sum_{k = 1}^\infty  {\tan ^{ - 1} \frac{{4k}}{{k^4  - 2k^2  + 2}}}  = \frac{{3\pi }}{4}\,,
\end{equation}
a result that was also reported in~\cite{bragg}.

\bigskip

At $\beta=2$, $q=3$, identity~\eqref{equ.fl6y4rm} gives
\begin{equation}
\sum_{k = 1}^\infty  {\tan ^{ - 1} \frac{{6\alpha k}}{{k^4  - 5k^2  + 4 + \alpha^2}}}  = \pi + \tan^{-1}\frac\alpha 2\,,\quad\alpha\in\R^+\,,
\end{equation}
which at $\alpha=2$ produces
\begin{equation}\label{equ.q0xt5i4}
\sum_{k = 1}^\infty  {\tan ^{ - 1} \frac{{12k}}{{k^4  - 5k^2  + 8}}}  = \frac{5\pi }{4}\,.
\end{equation}
Evaluating identity~\eqref{equ.fl6y4rm} at $\beta=0$, $q=1$ yields
\begin{equation}
\sum_{k = 1}^\infty  {\tan ^{ - 1} \frac{{2\alpha k}}{{k^4  - k^2  + \alpha ^2 }}}  = \frac{\pi }{2}\,,
\end{equation}
which at $\alpha=1/2$ gives the special value
\[
\sum_{k = 1}^\infty  {\tan ^{ - 1} \frac{{4k}}{{(2k^2  - 1)^2 }}}  = \frac{\pi }{2}\,.
\]
Setting $\beta=q^2/2$ in identity~\eqref{equ.fl6y4rm} gives
\begin{equation}\label{equ.du2vgo4}
\sum_{k = 1}^\infty  {\tan ^{ - 1} \frac{{8\alpha qk}}{{4k^4  + 4\alpha ^2  + q^4 }}}  = \sum_{k = 1}^q {\tan ^{ - 1} \frac{{2\alpha }}{{2k^2  - 2qk + q^2 }}}\,,
\end{equation}
and, in particular, with $\alpha=q^2/2$, we have
\begin{equation}
\sum_{k = 1}^\infty  {\tan ^{ - 1} \frac{{2q^3 k}}{{2k^4  + q^4 }}}  = \sum_{k = 1}^q {\tan ^{ - 1} \frac{{q^2 }}{{2k^2  - 2qk + q^2 }}}\,,
\end{equation}
giving at $q=1$, the special value
\begin{equation}
\sum_{k = 1}^\infty  {\tan ^{ - 1} \frac{{2k}}{{2k^4  + 1}}}  = \frac{\pi }{4}\,.
\end{equation}
Evaluating identity~\eqref{equ.du2vgo4} at $q=2$ gives
\begin{equation}
\sum_{k = 1}^\infty  {\tan ^{ - 1} \frac{{4\alpha k}}{{k^4  + \alpha ^2  + 4}}}  = \tan ^{ - 1} \frac{\alpha }{2} + \tan ^{ - 1} \alpha\,,
\end{equation}
which was also reported in~\cite{boros}.

\bigskip

Setting $2\alpha=\tan\theta$ and $q=1$ in identity~\eqref{equ.du2vgo4} gives
\begin{equation}
\theta  = \sum_{k = 1}^\infty  \tan^{-1}{\frac{{4k\tan \theta }}{{4k^4  + \sec ^2 \theta}}}\,,
\end{equation}
providing an infinite arctangent sum expansion for any angle $\theta$, such that $-\pi/2<\theta<\pi/2$. In particular,
\begin{equation}
\frac{\pi }{3} = \sum_{k = 1}^\infty  {\tan ^{ - 1} \frac{{k\sqrt 3 }}{{k^4  + 1}}}\,,\quad\frac{\pi }{6} = \sum_{k = 1}^\infty  {\tan ^{ - 1} \frac{{k\sqrt 3 }}{{3k^4  + 1}}}\,. 
\end{equation}
\subsection{Sums of products of arctangents}
\subsubsection{Examples from Theorem~\ref{thm.sveqk2u}}
Using $\beta=-1$ in the identity of Theorem~\ref{thm.sveqk2u}, we have
\begin{equation}\label{equ.bdchdd}
\begin{split}
&\sum_{k = 1}^\infty  {\tan ^{ - 1} \frac{{\alpha mq}}{{k^2  + (mq - 2)(k - 1) + \alpha ^2  - 1}}\prod_{j = 1}^{m - 1} {\tan ^{ - 1} \frac{\alpha }{{k + jq - 1}}} }\\
 &\qquad= \frac{\pi }{2}\prod_{j = 1}^{m - 1} {\tan ^{ - 1} \frac{\alpha }{{jq}}}  + \sum_{k = 2}^q {\prod_{j = 0}^{m - 1} {\tan ^{ - 1} \frac{\alpha }{{k + jq - 1}}} },\quad\alpha\in\R^+\, 
\end{split}
\end{equation}
In particular,
\begin{equation}
\begin{split}
&\sum_{k = 1}^\infty  {\tan ^{ - 1} \frac{{\alpha m}}{{k^2  + (m - 2)(k - 1) + \alpha ^2  - 1}}\prod_{j = 1}^{m - 1} {\tan ^{ - 1} \frac{\alpha }{{k + j - 1}}} }\\ 
 &\qquad\qquad= \frac{\pi }{2}\prod_{j = 1}^{m - 1} {\tan ^{ - 1} \frac{\alpha }{j}}\,,
\end{split}
\end{equation}
which at $m=2$ gives
\begin{equation}\label{equ.rix1u3f}
\sum_{k = 1}^\infty  {\tan ^{ - 1} \frac{{2\alpha }}{{k^2  + \alpha ^2  - 1}}\tan ^{ - 1} \frac{\alpha }{k}}  = \frac{\pi }{2}\tan ^{ - 1} \alpha,\quad\alpha\in\R^+\,,
\end{equation}
yielding at $\alpha=1$, $\alpha=\sqrt 3$ and $\alpha=1/\sqrt{3}$, respectively,
\[
\sum_{k = 1}^\infty  {\tan ^{ - 1} \frac{2}{{k^2 }}\tan ^{ - 1} \frac{1}{k}}  = \frac{{\pi ^2 }}{8}\,,
\]
\begin{equation}\label{equ.q1fro1u}
\sum_{k = 1}^\infty  {\tan ^{ - 1} \frac{{2\sqrt 3 }}{{k^2  + 2}}\tan ^{ - 1} \frac{{\sqrt 3 }}{k}}  = \frac{{\pi ^2 }}{6}
\end{equation}
and
\begin{equation}\label{equ.hn7l2vs}
\sum_{k = 1}^\infty  {\tan ^{ - 1} \frac{{2\sqrt 3 }}{{3k^2  - 2}}\tan ^{ - 1} \frac{{\sqrt 3 }}{{3k}}}  = \frac{{\pi ^2 }}{{12}}\,.
\end{equation}
At $m=3$, the identity~\eqref{equ.bdchdd} gives
\begin{equation}
\sum_{k = 1}^\infty  {\tan ^{ - 1} \frac{{3\alpha }}{{k^2  + k + \alpha ^2  - 2}}\tan ^{ - 1} \frac{\alpha }{k}} \tan ^{ - 1} \frac{\alpha }{{k + 1}} = \frac{\pi }{2}\tan ^{ - 1} \alpha \tan ^{ - 1} \frac{\alpha }{2}\,,
\end{equation}
from which we get the evaluations
\begin{equation}
\sum_{k = 1}^\infty  {\tan ^{ - 1} \frac{{3\sqrt 2 }}{{k^2  + k}}\tan ^{ - 1} \frac{{\sqrt 2 }}{k}} \tan ^{ - 1} \frac{{\sqrt 2 }}{{k + 1}} = \frac{\pi }{2}\tan ^{ - 1} \sqrt 2 \tan ^{ - 1} \frac{{\sqrt 2 }}{2}
\end{equation}
and
\begin{equation}
\sum_{k = 1}^\infty  {\tan ^{ - 1} \frac{{3\sqrt 3 }}{{k^2  + k + 1}}\tan ^{ - 1} \frac{{\sqrt 3 }}{k}} \tan ^{ - 1} \frac{{\sqrt 3 }}{{k + 1}} = \frac{{\pi ^2 }}{6}\tan ^{ - 1} \frac{{\sqrt 3 }}{2}\,,
\end{equation}
at $\alpha=\sqrt 2$ and $\alpha=\sqrt 3$, respectively.
The following four-member sum
\begin{equation}
\sum_{k = 1}^\infty  {\tan ^{ - 1} \frac{{4\sqrt 3 }}{{k^2  + 2k}}\tan ^{ - 1} \frac{{\sqrt 3 }}{k}} \tan ^{ - 1} \frac{{\sqrt 3 }}{{k + 1}}\tan ^{ - 1} \frac{{\sqrt 3 }}{{k + 2}} = \frac{{\pi ^3 }}{{36}}\tan ^{ - 1} \frac{{\sqrt 3 }}{2}\,,
\end{equation}
is produced at $m=4$ and $\alpha=\sqrt 3$ in identity~\eqref{equ.bdchdd}.
\subsubsection{Examples from Corollary~\ref{thm.pori4ri}}\label{sec.iga29qz}
If we set $\beta=-1/2$ in the identity of Corollary~\ref{thm.pori4ri} we have
\begin{equation}
\sum_{k = 1}^\infty  {\tan ^{ - 1} \frac{{2m^2 q^2 }}{{(2k + mq - 1)^2 }}\prod_{j = 1}^{m - 1} {\tan ^{ - 1} \frac{{mq}}{{2k + 2jq - 1}}} }  = \sum_{k = 1}^q {\prod_{j = 0}^{m - 1} {\tan ^{ - 1} \frac{{mq}}{{2k + 2jq - 1}}} }\,,
\end{equation}
while at $\beta=-1$ we have
\begin{equation}
\sum_{k = 1}^\infty  {\tan ^{ - 1} \frac{{2m^2 q^2 }}{{(2k + mq - 2)^2 }}\prod_{j = 1}^{m - 1} {\tan ^{ - 1} \frac{{mq}}{{2(k + jq - 1)}}} }  = \sum_{k = 1}^q {\prod_{j = 0}^{m - 1} {\tan ^{ - 1} \frac{{mq}}{{2(k + jq - 1)}}} }\,,
\end{equation}
which at $m=2$ gives
\begin{equation}\label{equ.vs31gxz}
\begin{split}
&\sum_{k = 1}^\infty  {\tan ^{ - 1} \frac{{2q^2 }}{{(k + q - 1)^2 }}\tan ^{ - 1} \frac{q}{{k + q - 1}}}\\
&\qquad = \frac{{\pi ^2 }}{8} + \sum_{k = 2}^q {\tan ^{ - 1} \frac{q}{{k - 1}}\tan ^{ - 1} \frac{q}{{k + q - 1}}}\,,
\end{split}
\end{equation}
which at $q=1$ gives identity~\eqref{equ.mvf2qbz}. 
\subsubsection{Examples from Theorem~\ref{thm.slsjsoq}}\label{sec.ewieaba}
Upon setting $q=1$ in~Theorem~\ref{thm.slsjsoq} we obtain
\begin{equation}\label{equ.k733t9v}
\begin{split}
&\sum_{k = 1}^\infty  {\tan ^{ - 1} \frac{{2\alpha mk}}{{k^4  + (2\beta  - m^2 )k^2  + \beta ^2  + \alpha ^2 }}\prod_{j = 1}^{m - 1} {\tan ^{ - 1} \frac{\alpha }{{k^2  - (m - 2j)k + j^2  + \beta  - mj}}} }\\ 
 &\qquad\qquad= \prod_{j = 0}^{m - 1} {\tan ^{ - 1} \frac{\alpha }{{j^2  + 2j + \beta  - mj - m + 1}}},\quad\alpha,\beta\in\R\,.
\end{split}
\end{equation}
In particular, we have
\begin{equation}\label{equ.hf1f5ua}
\begin{split}
&\sum_{k = 1}^\infty  {\tan ^{ - 1} \frac{{4\alpha k}}{{k^4  + 2(\beta  - 2)k^2  + \alpha ^2  + \beta ^2 }}\tan ^{ - 1} \frac{\alpha }{{k^2  + \beta  - 1}}}\\
&\qquad\qquad= \tan ^{ - 1} \frac{\alpha }{{\beta  - 1}}\tan ^{ - 1} \frac{\alpha }{\beta }
\end{split}
\end{equation}
and
\begin{equation}\label{equ.nm9vnkw}
\begin{split}
&\sum_{k = 1}^\infty  {\tan ^{ - 1} \frac{{6\alpha k}}{{k^4  + (2\beta  - 9)k^2  + \alpha ^2  + \beta ^2 }}\tan ^{ - 1} \frac{\alpha }{{k^2  - k + \beta  - 2}}\tan ^{ - 1} \frac{\alpha }{{k^2  + k + \beta  - 2}}} \\
&\qquad\qquad= \left( {\tan ^{ - 1} \frac{\alpha }{{\beta  - 2}}} \right)^2 \tan ^{ - 1} \frac{\alpha }{\beta }\,,
\end{split}
\end{equation}
which at $\beta=1$ give
\begin{equation}\label{equ.hyiedj3}
\sum_{k = 1}^\infty  {\tan ^{ - 1} \frac{{4\alpha k}}{{k^4  - 2k^2  + \alpha ^2  + 1}}\tan ^{ - 1} \frac{\alpha }{{k^2 }}}  = \frac{\pi }{2}\tan ^{ - 1} \alpha
\end{equation}
and
\begin{equation}\label{equ.aixbx8z}
\sum_{k = 1}^\infty  {\tan ^{ - 1} \frac{{6\alpha k}}{{k^4  -7k^2  + \alpha ^2  + 1}}\tan ^{ - 1} \frac{\alpha }{{k^2  - k - 1}}\tan ^{ - 1} \frac{\alpha }{{k^2  + k - 1}}}  = \left( {\tan ^{ - 1} \alpha } \right)^3\,,
\end{equation}
producing at $\alpha=1$,
\begin{equation}
\sum_{k = 1}^\infty  {\tan ^{ - 1} \frac{{4k}}{{k^4  - 2k^2  + 2}}\tan ^{ - 1} \frac{1}{{k^2 }}}  = \frac{{\pi ^2 }}{8}
\end{equation}
and
\[
\sum_{k = 1}^\infty  {\tan ^{ - 1} \frac{{6k}}{{k^4  - 7k^2  + 2}}\tan ^{ - 1} \frac{1}{{k^2  - k - 1}}} \tan ^{ - 1} \frac{1}{{k^2  + k - 1}} = \frac{{\pi ^3 }}{{64}}\,.
\]
Special values from~\eqref{equ.hyiedj3} include
\begin{equation}
\sum_{k = 1}^\infty  {\tan ^{ - 1} \frac{{4k\sqrt 3 }}{{k^4  - 2k^2  + 4}}\tan ^{ - 1} \frac{{\sqrt 3 }}{{k^2 }}}  = \frac{{\pi ^2 }}{6}
\end{equation}
and
\begin{equation}
\sum_{k = 1}^\infty  {\tan ^{ - 1} \frac{{4k\sqrt 3 }}{{3k^4  - 6k^2  + 4}}\tan ^{ - 1} \frac{{\sqrt 3 }}{{3k^2 }}}  = \frac{{\pi ^2 }}{{12}}\,.
\end{equation}
By first replacing $\beta$ with $\beta-1$ and then setting $\alpha=\sin\theta$ and $\beta=\cos\theta$, the identity~\eqref{equ.hf1f5ua} can be put in the form,
\begin{equation}
\frac{{\theta ^2 }}{2} = \sum_{k = 1}^\infty  {\tan ^{ - 1} \frac{{4k\sin \theta }}{{k^4  - 4k^2 \sin ^2 (\theta /2) + 4\cos ^2 (\theta /2)}}\tan ^{ - 1} \frac{{\sin \theta }}{{k^2  + \cos \theta }}}\,,
\end{equation}
suitable for expressing the square of any angle $\theta$ with magnitude less that $\pi/2$ as an infinite sum of products of two arctangents. In particular, at $\theta=\pi/3$, we have
\begin{equation}
\frac{{\pi ^2 }}{{18}} = \sum_{k = 1}^\infty  {\tan ^{ - 1} \frac{{2k\sqrt 3 }}{{k^4  - k^2  + 3}}\tan ^{ - 1} \frac{{\sqrt 3 }}{{2k^2  + 1}}}\,.
\end{equation}
At $m=5$, $\alpha=1=\beta=q$ in identity~\eqref{equ.k733t9v} we have
\begin{equation}
\begin{split}
&\sum_{k = 1}^\infty  {\tan ^{ - 1} \frac{{10k}}{{k^4  - 23k^2  + 2}}\tan ^{ - 1} \frac{1}{{k^2  - 3k - 3}}} \tan ^{ - 1} \frac{1}{{k^2  - k - 5}}\times\\
&\qquad\qquad\times \tan ^{ - 1} \frac{1}{{k^2  - k + 5}}\tan ^{ - 1} \frac{1}{{k^2  + 3k - 3}}\\
&\qquad\qquad\qquad = \frac{\pi }{4}\left( {\tan ^{ - 1} \frac{1}{3}} \right)^2 \left( {\tan ^{ - 1} \frac{1}{5}} \right)^2\,. 
\end{split}
\end{equation}
\subsubsection{Examples from Theorem~\ref{thm.ivbxym1}}
Using $\beta=0$, $\alpha=1$, $q=1$ in identity~\eqref{equ.ljah2jn} gives
\begin{equation}
\sum_{k = 1}^\infty  {\tan ^{ - 1} \frac{1}{{k^2  + k + 1}}\tan ^{ - 1} \frac{{2k + 1}}{{k^2  + k - 1}}}  = \frac{{\pi ^2 }}{{16}}\,.
\end{equation}
Setting $q=1$ and $\alpha=1/2=-\beta$ in identity~\eqref{equ.ljah2jn}, we obtain
\begin{equation}
\sum_{k = 1}^\infty  {\tan ^{ - 1} \frac{1}{{2k^2 }}\tan ^{ - 1} \frac{{2k}}{{2k^2  - 1}}}  = \frac{{\pi ^2 }}{{16}}\,,
\end{equation}
while setting $q=2$ and $\alpha=1/2$, $\beta=-3/2$ in the same identity gives
\begin{equation}
\sum_{k = 1}^\infty  {\tan ^{ - 1} \frac{2}{{2k^2  - 2k - 1}}\tan ^{ - 1} \frac{{2k - 1}}{{2(k^2  - k - 1)}}}  = \frac{{\pi ^2 }}{8}\,.
\end{equation}
Using $m=2$, $q=1$ and $\alpha=1/2$, $\beta=-3/2$ in Theorem~\ref{thm.ivbxym1} gives
\begin{equation}
\sum_{k = 1}^\infty  {\left( {\tan ^{ - 1} \frac{1}{{2k - 1}}} \right)^2 \tan ^{ - 1} \frac{2}{{2k^2  - 2k - 1}}\tan ^{ - 1} \frac{{2k - 1}}{{2(k^2  - k - 1)}}}  = \frac{{\pi ^4 }}{{256}}\,.
\end{equation}
Using $m=3$, $q=1$ and $\alpha=1=\beta$ in Theorem~\ref{thm.ivbxym1} gives
\begin{equation}
\begin{split}
&\sum_{k = 1}^\infty  {\left( {\tan ^{ - 1} \frac{1}{{k + 2}}} \right)^2 \left( {\tan ^{ - 1} \frac{1}{{k + 3}}} \right)^2 \tan ^{ - 1} \frac{3}{{k^2  + 5k + 5}}\tan ^{ - 1} \frac{{2k + 5}}{{k^2  + 5k + 3}}}\\
&\qquad\qquad= \left( {\tan ^{ - 1} \frac{1}{2}} \right)^2 \left( {\tan ^{ - 1} \frac{1}{3}} \right)^2 \left( {\tan ^{ - 1} \frac{1}{4}} \right)^2\,.
\end{split}
\end{equation}
\subsubsection{Examples from Theorem~\ref{thm.lqwviov}}
Use of $m=1=q$ in the identity of Theorem~\ref{thm.lqwviov} gives
\begin{equation}
\begin{split}
&\sum_{k = 1}^\infty  {\tan ^{ - 1} \frac{{2\alpha k}}{{k^4  + (2\beta  - 1)k^2  + \beta ^2  + \alpha ^2 }}\tan ^{ - 1} \frac{{2\alpha (k^2  + \beta )}}{{k^4  + (2\beta  - 1)k^2  + \beta ^2  - \alpha ^2 }}}\\  &\qquad\qquad= \left( {\tan ^{ - 1} \frac{\alpha }{\beta }} \right)^2\,,
\end{split}
\end{equation}
giving, in particular,
\begin{equation}
\sum_{k = 1}^\infty  {\tan ^{ - 1} \frac{{8\alpha k}}{{4k^4  + 1 + 4\alpha ^2 }}\tan ^{ - 1} \frac{{4\alpha (2k^2  + 1)}}{{4k^4  + 1 - 4\alpha ^2 }}}  = \left( {\tan ^{ - 1} 2\alpha } \right)^2\,,
\end{equation}
yielding the evaluations
\begin{equation}
\sum_{k = 1}^\infty  {\tan ^{ - 1} \frac{{k\sqrt 3 }}{{k^4  + 1}}\tan ^{ - 1} \frac{{\sqrt 3 (2k^2  + 1)}}{{2k^4  - 1}}}  = \frac{{\pi ^2 }}{9}
\end{equation}
and
\begin{equation}
\sum_{k = 1}^\infty  {\tan ^{ - 1} \frac{{k\sqrt 3 }}{{3k^4  + 1}}\tan ^{ - 1} \frac{{\sqrt 3 (2k^2  + 1)}}{{6k^4  + 1}}}  = \frac{{\pi ^2 }}{{36}}\,,
\end{equation}
at $\alpha=\sqrt3/2$ and $\alpha=1/(2\sqrt3)$ respectively.

\bigskip

Setting $m=1$ and $\alpha=1/2=\beta$ in the identity of Theorem~\ref{thm.lqwviov} gives
\begin{equation}\label{equ.g3fwlld}
\begin{split}
&\sum_{k = 1}^\infty  {\tan ^{ - 1} \frac{{2qk}}{{2k^4  - (q^2  - 1)2k^2  + 1}}\tan ^{ - 1} \frac{{2k^2  + 1}}{{2k^2 (k^2  + 1 - q^2 )}}}\\
&\qquad= \sum_{k = 1}^q {\left( {\tan ^{ - 1} \frac{1}{{2k^2  - 2qk + 1}}} \right)^2 }\,,
\end{split}
\end{equation}
from which we get the special evaluations
\begin{equation}
\sum_{k = 1}^\infty  {\tan ^{ - 1} \frac{{2k}}{{2k^4  + 1}}\tan ^{ - 1} \frac{{2k^2  + 1}}{{2k^4 }}}  = \frac{{\pi ^2 }}{{16}}
\end{equation}
and
\begin{equation}
\sum_{k = 1}^\infty  {\tan ^{ - 1} \frac{{4k}}{{2k^4  - 6k^2  + 1}}\tan ^{ - 1} \frac{{2k^2  + 1}}{{2k^2 (k^2  - 3)}}}  = \frac{{\pi ^2 }}{8}\,.
\end{equation}
Similarly, $m=1$ and $\alpha=1/2=-\beta$ in the identity of Theorem~\ref{thm.lqwviov} gives
\begin{equation}\label{equ.mhe9kvi}
\begin{split}
&\sum_{k = 1}^\infty  {\tan ^{ - 1} \frac{{2qk}}{{2k^4  - (q^2  + 1)2k^2  + 1}}\tan ^{ - 1} \frac{{(2k^2  - 1)}}{{2k^2 (k^2  - 1 - q^2 )}}} \\
&\qquad = \sum_{k = 1}^q {\left( {\tan ^{ - 1} \frac{1}{{2k^2  - 2qk - 1}}} \right)^2 }\,,
\end{split}
\end{equation}
giving, in particular,
\begin{equation}
\sum_{k = 1}^\infty  {\tan ^{ - 1} \frac{{4k}}{{2k^4  - 4k^2  + 1}}\tan ^{ - 1} \frac{{(2k^2  - 1)}}{{2k^2 (k^2  - 2)}}}  = \frac{{\pi ^2 }}{{16}}\,.
\end{equation}
Setting $m=1=\alpha=\beta$ in the identity of Theorem~\ref{thm.lqwviov}, we have,
\begin{equation}
\begin{split}
&\sum_{k = 1}^\infty  {\tan ^{ - 1} \frac{{2kq}}{{k^4  - (q^2  - 2)k^2  + 2}}\tan ^{ - 1} \frac{{2(k^2  + 1)}}{{k^2 (k^2  - q^2  + 2)}}}\\
&= \sum_{k = 1}^q {\left( {\tan ^{ - 1} \frac{1}{{k^2  - qk + 1}}} \right)^2 }\,,
\end{split}
\end{equation}
giving, in particular,
\begin{equation}
\sum_{k = 1}^\infty  {\tan ^{ - 1} \frac{{2k}}{{k^4  + k^2  + 2}}\tan ^{ - 1} \frac{2}{{k^2 }}}  = \frac{{\pi ^2 }}{{16}}
\end{equation}
and
\begin{equation}
\sum_{k = 1}^\infty  {\tan ^{ - 1} \frac{{6k}}{{k^4  - 7k^2  + 2}}\tan ^{ - 1} \frac{{2(k^2  + 1)}}{{k^2 (k^2  - 7)}}}  = \frac{{3\pi ^2 }}{{16}}\,.
\end{equation}
With $m=1=q=\alpha=-\beta$ in the identity of Theorem~\ref{thm.lqwviov}, we have,
\begin{equation}
\sum_{k = 2}^\infty  {\tan ^{ - 1} \frac{{2k}}{{(k - 1)(k + 1)(k^2  - 2)}}\tan ^{ - 1} \frac{{2(k - 1)(k + 1)}}{{k^2 (k^2  - 3)}}}  = \frac{{\pi ^2 }}{{16}}\,.
\end{equation}
Putting $m=2=\alpha=\beta$ in the identity of Theorem~\ref{thm.lqwviov}, we have,
\begin{equation}
\begin{split}
&\sum_{k = 1}^\infty  {\tan ^{ - 1} \frac{{8qk}}{{k^4  - (q^2  - 1)4k^2  + 8}}\tan ^{ - 1} \frac{{4(k^2  + 2)}}{{k^2 (k^2  - 4q^2  + 4)}}\left( {\tan ^{ - 1} \frac{2}{{k^2  - q^2  + 2}}} \right)^2 }\\
&\qquad= \sum_{k = 1}^q {\left( {\tan ^{ - 1} \frac{2}{{k^2  - 2qk + 2}}} \right)^2 \left( {\tan ^{ - 1} \frac{2}{{k^2  - q^2  + 2}}} \right)^2 }\,,
\end{split}
\end{equation}
giving the special evaluation
\begin{equation}
\sum_{k = 1}^\infty  {\tan ^{ - 1} \frac{{8k}}{{k^4  + 8}}\tan ^{ - 1} \frac{{4(k^2  + 2)}}{{k^4 }}\left( {\tan ^{ - 1} \frac{2}{{k^2  + 1}}} \right)^2 }  = \frac{{\pi ^2 }}{{16}}(\tan ^{ - 1} 2)^2\,.
\end{equation}
Setting $m=2$ and $\alpha=1=-\beta$ gives
\begin{equation}
\begin{split}
&\sum_{k = 1}^\infty  {\tan ^{ - 1} \frac{{4qk}}{{k^4  - (2q^2  + 1)2k^2  + 2}}\tan ^{ - 1} \frac{{2(k - 1)(k + 1)}}{{k^2 (k^2  - 4q^2  - 2)}}}  \times\\
&\qquad\times \left( {\tan ^{ - 1} \frac{1}{{k^2  - q^2  - 1)}}} \right)^2\\
&\qquad\qquad= \sum_{k = 1}^q {\left( {\tan ^{ - 1} \frac{1}{{k^2  - 2kq - 1)}}} \right)^2 \left( {\tan ^{ - 1} \frac{1}{{k^2  - q^2  - 1)}}} \right)^2 }\,.
\end{split}
\end{equation}
\subsection{More examples}
Using $f(k)=(k-1)^3$ in Lemma~\ref{L1} at $m=1$ gives
\begin{equation}\label{equ.qz6skkb4}
\sum_{k = 1}^\infty  {\tan ^{ - 1} \frac{{\alpha (3k^2  - 3k + 1)}}{{k^3 (k - 1)^3  + \alpha ^2 }}}  = \frac{\pi }{2}
\end{equation}
at $q=1$ and
\begin{equation}\label{equ.ruu67fm}
\sum_{k = 1}^\infty  {\tan ^{ - 1} \frac{{2\alpha (3k^2  + 1)}}{{(k^2  - 1)^3  + \alpha ^2 }}}  = \frac{\pi }{2} + \tan ^{ - 1} \alpha\,,
\end{equation}
at $q=2$, for $\alpha$ a positive real number.
At $\alpha=1$, identity~\eqref{equ.qz6skkb4} gives
\begin{equation}
\sum_{k = 1}^\infty  {\tan ^{ - 1} \frac{{(3k^2  - 3k + 1)}}{{(k^2  - k + 1)(k^4  - 2k^3  + k + 1)}}}  = \frac{\pi }{2}
\end{equation}
while identity~\eqref{equ.ruu67fm} gives
\begin{equation}
\sum_{k = 1}^\infty  {\tan ^{ - 1} \frac{{2(3k^2  + 1)}}{{k^2 (k^4  - 3k^2  + 3)}}}  = \frac{{3\pi }}{4}\,.
\end{equation}

At $m=2$ and $q=1$, we have
\begin{equation}
\sum_{k = 1}^\infty  {\tan ^{ - 1} \frac{{2\alpha (3k^2  + 1)}}{{(k^2  - 1)^3  + \alpha ^2 }}\tan^{-1}\frac\alpha{k^3}}  = \frac{\pi }{2}\tan ^{ - 1} \alpha,\quad\alpha\in\R^+\,.
\end{equation}
Using $f(k)=k^2-3k+2$ and $m=1$ in Lemma~\ref{L1}, we have
\begin{equation}
\begin{split}
&\sum_{k = 1}^\infty  {\tan ^{ - 1} \frac{{\alpha q(2k + q - 3)}}{{(k - 1)(k - 2)(k + q - 1)(k + q - 2) + \alpha ^2 }}}\\
&\qquad\qquad\qquad  = \pi  + \sum_{k = 1}^{q - 2} {\tan ^{ - 1} \frac{\alpha }{{k(k + 1)}}}\,,
\end{split}
\end{equation}
from which we get
\begin{equation}
\sum_{k = 1}^\infty  {\tan ^{ - 1} \frac{{2\alpha (k - 1)}}{{(k - 1)^2 (k - 2)k + \alpha ^2 }}}  = \frac{\pi }{2}\,,
\end{equation}
\begin{equation}\label{equ.awkuyp9}
\sum_{k = 1}^\infty  {\tan ^{ - 1} \frac{{2\alpha (2k - 1)}}{{(k - 1)(k - 2)(k + 1)k + \alpha ^2 }}}  = \pi
\end{equation}
and
\begin{equation}\label{equ.kt8f9av}
\sum_{k = 1}^\infty  {\tan ^{ - 1} \frac{{6\alpha k}}{{(k - 1)(k - 2)(k + 2)(k + 1) + \alpha ^2 }}}  = \pi  + \tan ^{ - 1} \frac{\alpha }{2}\,,
\end{equation}
for positive real $\alpha$. 
Evaluating at $\alpha=1$, identity~\eqref{equ.awkuyp9} gives
\begin{equation}
\sum_{k = 1}^\infty  {\tan ^{ - 1} \frac{{2(2k - 1)}}{{(k^2  - k - 1)^2 }}}  = \pi\,,
\end{equation}
while at $\alpha=2$, identity~\eqref{equ.kt8f9av} reproduces identity~\eqref{equ.q0xt5i4}.

\bigskip

With $f(k)=k^2-3k+2$ and $m=2$, $q=1$ in Lemma~\ref{L1}, we have
\begin{equation}
\sum_{k = 1}^\infty  {\tan ^{ - 1} \frac{{2\alpha (2k + 1)}}{{(k - 1)k(k + 1)(k + 2) + \alpha ^2 }}\tan ^{ - 1} \frac{\alpha }{{k(k + 1)}}}  = \frac{\pi }{2}\tan ^{ - 1} \frac{\alpha }{2}\,,
\end{equation}
giving, in particular,
\begin{equation}
\sum_{k = 1}^\infty  {\tan ^{ - 1} \frac{{2(2k + 1)}}{{(k^2  + k - 1)^2 }}\tan ^{ - 1} \frac{1}{{k(k + 1)}}}  = \frac{\pi }{2}\tan ^{ - 1} \frac{1}{2}\,.
\end{equation}
Using $f(k)=(k^2-5k+5)^2$ in Lemma~\ref{L1} gives
\begin{equation}
\sum_{k = 1}^\infty  {\tan ^{ - 1} \frac{{8\alpha (2k - 1)(k^2  - k + 3)}}{{(k^2  + 3k + 1)^2 (k^2  - 5k + 5)^2  + \alpha ^2 }}\tan ^{ - 1} \frac{\alpha }{{(k^2  - k - 1)^2 }}}  = 2(\tan ^{ - 1} \alpha )^2\,,
\end{equation}
at $q=2$, $m=2$,
\begin{equation}
\begin{split}
&\sum_{k = 1}^\infty  {\tan ^{ - 1} \frac{{12\alpha (k - 1)(k^2  - 2k + 2)}}{{(k^2  + k - 1)^2 (k^2  - 5k + 5)^2  + \alpha ^2 }}\tan ^{ - 1} \frac{\alpha }{{(k^2  - 3k + 1)^2 }}}\times\\
&\qquad\qquad\times\tan ^{ - 1} \frac{\alpha }{{(k^2  - k - 1)^2 }} = (\tan ^{ - 1} \alpha )^3
\end{split}
\end{equation}
at $m=3$, $q=1$ and
\begin{equation}
\begin{split}
&\sum_{k = 1}^\infty  {\tan ^{ - 1} \frac{{8\alpha (2k - 1)(k^2  - k + 3)}}{{(k^2  + 3k + 1)^2 (k^2  - 5k + 5)^2  + \alpha ^2 }}\tan ^{ - 1} \frac{\alpha }{{(k^2  - 3k + 1)^2 }}}\times\\
&\qquad\qquad\times\tan ^{ - 1} \frac{\alpha }{{(k^2  - k - 1)^2 }}\tan ^{ - 1} \frac{\alpha }{{(k^2  + k - 1)^2 }} = (\tan ^{ - 1} \alpha )^4\,,
\end{split}
\end{equation}
at $m=4$, $q=1$, for real numbers $\alpha$.

\bigskip

Use of $f(k)=k^2-5k+5$ in Lemma~\ref{LP}, with $\alpha=1$, $m=1$ produces
\begin{equation}
\sum_{k = 1}^\infty  {\tan ^{ - 1} \frac{{2(k + 1)}}{{k(k + 2)(k^2  + 2k - 1)}}\tan ^{ - 1} \frac{{2k(k + 2)}}{{(k + 1)^2 (k^2  + 2k - 2)}}}  = \frac{{\pi ^2 }}{{16}}\,,
\end{equation}
at $q=1$ and
\begin{equation}
\sum_{k = 1}^\infty  {\tan ^{ - 1} \frac{{2(2k + 1)}}{{k(k + 1)(k^2  + k - 4)}}\tan ^{ - 1} \frac{{2k(k + 1)}}{{k^4  + 2k^3  - 3k^2  - 4k - 2}}}  = \frac{{\pi ^2 }}{8}\,,
\end{equation}
at $q=2$.

\bigskip

Use of $f(k)=k^2-5k+5$ in Lemma~\ref{LP}, with $q=1$, $\alpha=1$ and $m=3$ produces
\begin{equation}
\begin{split}
&\sum_{k = 1}^\infty  {\tan ^{ - 1} \frac{{6k}}{{k^4  - 7k^2  + 2}}\tan ^{ - 1} \frac{{2(k^2  + 1)}}{{k^2 (k^2  - 7)}}}\times\\ 
&\qquad\qquad\times\left( {\tan ^{ - 1} \frac{1}{{k^2  - k - 1}}} \right)^2 \left( {\tan ^{ - 1} \frac{1}{{k^2  + k - 1}}} \right)^2  = \frac{{\pi ^6 }}{{4096}}\,.
\end{split}
\end{equation}
\subsection{Alternating sums}
\begin{lemma}\label{L2}
Let $\alpha$ be a real number, let $m$ and $q$ be positive integers and let $\{ f(k)\} _{k = 1}^\infty  $ be a real positive non-decreasing sequence such that \mbox{$\lim_{k\to\infty}f(k)=\infty$}, then 
\[
\begin{split}
&\sum_{k = 1}^\infty  {(-1)^{k-1}\tan ^{ - 1} \frac{{\alpha (f(k + mq) \mp f(k))}}{{f(k)f(k + mq) \pm \alpha ^2 }}\prod_{j = 1}^{m - 1} {\tan ^{ - 1} \frac{\alpha }{{f(k + jq)}}} }\\
&\qquad\qquad= \sum_{k = 1}^q {(-1)^{k-1}\prod_{j = 0}^{m - 1} {\tan ^{ - 1} \frac{\alpha }{{f(k + jq)}}} }\,,
\end{split}
\]
where the upper signs apply if $q$ is even, and the lower if $q$ is odd.
\end{lemma}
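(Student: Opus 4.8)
The plan is to mirror the proof of Lemma~\ref{L1}, the only new ingredient being careful bookkeeping of the alternating sign. Write $a_k=\tan^{-1}(\alpha/f(k))$ and set
\[
P_k=\prod_{j=0}^{m-1}a_{k+jq}=\prod_{j=0}^{m-1}\tan^{-1}\frac{\alpha}{f(k+jq)},
\]
so that $P_{k+q}=a_{k+mq}\prod_{j=1}^{m-1}a_{k+jq}$. First I would collapse the leading arctangent factor of the summand. When $q$ is even the upper signs are in force and the subtraction formula~\eqref{equ.ie9kz2w} gives
\[
\tan^{-1}\frac{\alpha(f(k+mq)-f(k))}{f(k)f(k+mq)+\alpha^2}=a_k-a_{k+mq};
\]
when $q$ is odd the lower signs apply and the addition formula~\eqref{equ.zkvzqba} gives instead $a_k+a_{k+mq}$. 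Multiplying by $\prod_{j=1}^{m-1}a_{k+jq}$ and merging the two cases, the $k$-th term of the left-hand side becomes
\[
(-1)^{k-1}\bigl(P_k-(-1)^q P_{k+q}\bigr),
\]
since the coefficient $-(-1)^q$ equals $-1$ for even $q$ and $+1$ for odd $q$.

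The heart of the argument is then an alternating telescoping. Summing to $N$ and shifting the index $i=k+q$ in the second piece, one has $(-1)^{k-1}=(-1)^{i-1}(-1)^{q}$, so the prefactor $(-1)^q$ is reproduced and the two factors of $(-1)^q$ cancel:
\[
\sum_{k=1}^N(-1)^{k-1}\bigl(P_k-(-1)^qP_{k+q}\bigr)=\sum_{k=1}^q(-1)^{k-1}P_k-\sum_{i=N+1}^{N+q}(-1)^{i-1}P_i.
\]
This is precisely where the parity-matched sign convention of the statement is essential: it is the identity $(-1)^q(-1)^q=1$ that makes \emph{both} cases telescope to the same right-hand side $\sum_{k=1}^q(-1)^{k-1}P_k$.

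Finally I would let $N\to\infty$. Because $\{f(k)\}$ is positive, non-decreasing and tends to infinity, $a_k=\tan^{-1}(\alpha/f(k))\to0$, hence $P_i\to0$ and the residual tail $\sum_{i=N+1}^{N+q}(-1)^{i-1}P_i$, a fixed number $q$ of vanishing terms, goes to $0$. The main point requiring care is not the telescoping itself but the admissibility of the trigonometric reductions: the subtraction formula used for even $q$ holds unconditionally here, its hypothesis $\alpha^2/(f(k)f(k+mq))\ge-1$ being automatic for positive $f$, whereas for odd $q$ the addition formula~\eqref{equ.zkvzqba} carries the restriction $\alpha^2<f(k)f(k+mq)$, which one should verify for every $k\ge1$ so that no principal-value correction intrudes.
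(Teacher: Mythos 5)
Your proof is correct and is essentially the argument the paper relies on: the paper disposes of Lemma~\ref{L2} by citing the alternating telescoping identity~(2.4) of~\cite{adegoke} together with the two arctangent formulas~\eqref{equ.ie9kz2w} and~\eqref{equ.zkvzqba}, and your derivation simply carries out that telescoping explicitly, with the correct parity bookkeeping $(-1)^{q}(-1)^{q}=1$ and the correct treatment of the vanishing tail. Your closing caveat about the restriction $\alpha^{2}<f(k)f(k+mq)$ needed for the odd-$q$ (addition-formula) case is a fair point that the paper itself leaves implicit.
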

\begin{lemma}\label{LPalt}
Let $\alpha$ be a real number, let $m$ and $q$ be positive integers and let $\{ f(k)\} _{k = 1}^\infty  $ be a real positive non-decreasing sequence such that \mbox{$\lim_{k\to\infty}f(k)=\infty$}, then 
\[
\begin{split}
&\sum_{k = 1}^\infty  {( - 1)^{k - 1} \tan ^{ - 1} \frac{{\alpha (f(k + 2mq) - f(k))}}{{f(k)f(k + 2mq) + \alpha ^2 }}\tan ^{ - 1} \frac{{\alpha (f(k + 2mq) + f(k))}}{{f(k)f(k + 2mq) - \alpha ^2 }} \times }\\
&\qquad\qquad\times \prod_{j = 1}^{m - 1} {\left( {\tan ^{ - 1} \frac{\alpha }{{f(k + 2jq)}}} \right)} ^2\\
&\qquad\qquad\qquad= \sum_{k = 1}^{2q} {( - 1)^{k - 1} \prod_{j = 0}^{m - 1} {\left( {\tan ^{ - 1} \frac{\alpha }{{f(k + 2jq)}}} \right)} ^2 }\,.
\end{split}
\]

\end{lemma}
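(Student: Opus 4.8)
The plan is to mirror the derivation of Lemma~\ref{LP} as far as collapsing the summand, and then to sum an \emph{alternating} telescoping series rather than an ordinary one; the doubling of every shift (to $2mq$, $2jq$, and the upper limit $2q$) is precisely what makes the alternating telescope go through without a parity case-split. Throughout, write $g(k) = \tan^{-1}\frac{\alpha}{f(k)}$.

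First I would dispatch the two leading arctangent factors. The difference identity~\eqref{equ.ie9kz2w}, applied with $\lambda = \alpha$, $x = f(k)$, $y = f(k+2mq)$, turns the first factor into $g(k) - g(k+2mq)$; its side condition $\alpha^2/\bigl(f(k)f(k+2mq)\bigr) \ge -1$ holds automatically since $f > 0$. The sum identity~\eqref{equ.zkvzqba} turns the second factor into $g(k) + g(k+2mq)$; here one should note that the side condition $\alpha^2 < f(k)f(k+2mq)$ holds for all large $k$ because $f(k)\to\infty$. Multiplying the two collapses them to $g(k)^2 - g(k+2mq)^2$, exactly as in Lemma~\ref{LP}.

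The summand (before the sign) is then $\bigl[g(k)^2 - g(k+2mq)^2\bigr]\prod_{j=1}^{m-1} g(k+2jq)^2$. Setting $R(k) = \prod_{j=0}^{m-1} g(k+2jq)^2$, the first piece is $R(k)$ and the second, after the index shift $j\mapsto j+1$, is $\prod_{j=1}^{m} g(k+2jq)^2 = R(k+2q)$. Hence the summand equals $R(k) - R(k+2q)$, and the left-hand side becomes $\sum_{k=1}^\infty (-1)^{k-1}\bigl[R(k) - R(k+2q)\bigr]$.

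The last step is the alternating telescope. Re-indexing $\sum_{k=1}^\infty (-1)^{k-1} R(k+2q)$ by replacing $k+2q$ with a fresh variable gives $\sum_{k=2q+1}^\infty (-1)^{k-2q-1} R(k)$, and because $2q$ is \emph{even} one has $(-1)^{k-2q-1} = (-1)^{k-1}$, so this is just $\sum_{k=2q+1}^\infty (-1)^{k-1} R(k)$. Subtracting it from $\sum_{k=1}^\infty (-1)^{k-1} R(k)$ leaves exactly $\sum_{k=1}^{2q} (-1)^{k-1} R(k)$, the asserted right-hand side; convergence of the two tails is secured by $R(k)\to 0$, which follows from $f(k)\to\infty$. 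The only real obstacle is bookkeeping: keeping the index shift $\prod_{j=1}^{m} g(k+2jq)^2 = R(k+2q)$ straight, and recognizing that the even step $2q$ is what aligns the alternating signs---this is the structural reason the parameters are doubled, and it is what lets this lemma avoid the parity-dependent sign convention that Lemma~\ref{L2} carries.
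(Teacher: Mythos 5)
Your argument is correct and takes essentially the same route as the paper, which likewise collapses the two leading factors via identities~\eqref{equ.ie9kz2w} and~\eqref{equ.zkvzqba} and then appeals to the alternating telescoping identity~(2.4) of~\cite{adegoke} --- precisely the telescope you carry out explicitly, with the even step $2q$ aligning the signs. The one caveat, that the side condition $\alpha^2 < f(k)f(k+2mq)$ for~\eqref{equ.zkvzqba} is only guaranteed for large $k$, is inherited from the paper's own unrestricted statement of the lemma and is not a defect of your proof relative to the source.
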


Lemma~\ref{L2} and Lemma~\ref{LPalt} follow from identity~(2.4) of~\cite{adegoke} and the trigonometric identities~\eqref{equ.ie9kz2w} and \eqref{equ.zkvzqba}.

\bigskip

On account of Lemma~\ref{L2} with the upper signs and Lemma~\ref{LPalt}, alternating versions of the results obtained in the previous sections are readily obtained by replacing $q$ with $2q$. For example, corresponding to identity~\eqref{equ.s0rb2zk} is the following alternating sum
\begin{equation}
\sum_{k = 1}^\infty  {( - 1)^{k - 1} \tan ^{ - 1} \frac{{2\alpha q}}{{(k + q)^2  + \alpha ^2  - q^2 }}}  = \sum_{k = 1}^{2q} {( - 1)^{k - 1} \tan ^{ - 1} \frac{\alpha }{k}}\,,\quad\mbox{$\alpha\in\R$}\,,
\end{equation}
which gives, in particular,
\begin{equation}
\sum_{k = 1}^\infty  {( - 1)^{k - 1} \tan ^{ - 1} \frac{{2q^2 }}{{(k + q)^2 }}}  = \sum_{k = 1}^{2q} {( - 1)^{k - 1} \tan ^{ - 1} \frac{q}{k}}\,,
\end{equation}
from which we get the special value
\begin{equation}
\sum_{k = 1}^\infty  {( - 1)^{k-1} \tan ^{ - 1} \frac{2}{{k^2 }}}  = \frac{\pi }{4}\,.
\end{equation}
Similarly, the alternating version of identity~\eqref{equ.vs31gxz} is
\begin{equation}
\begin{split}
&\sum_{k = 1}^\infty  {( - 1)^{k - 1} \tan ^{ - 1} \frac{{8q^2 }}{{(k + 2q - 1)^2 }}\tan ^{ - 1} \frac{{2q}}{{(k + 2q - 1)}}}\\
&\qquad= \frac{{\pi ^2 }}{8} + \sum_{k = 2}^{2q} {( - 1)^{k - 1} \tan ^{ - 1} \frac{{2q}}{{k - 1}}\tan ^{ - 1} \frac{{2q}}{{(k + 2q - 1)}}}\,.
\end{split}
\end{equation}
The alternating version of identity~\eqref{equ.g3fwlld} is
\begin{equation}
\begin{split}
&\sum_{k = 1}^\infty  {( - 1)^{k - 1} \tan ^{ - 1} \frac{{4kq}}{{2k^4  - 2k^2 (4q^2  - 1)+1}}\tan ^{ - 1} \frac{{2k^2  + 1}}{{2k^2 (k^2  - 4q^2  + 1)}}}\\
&\qquad\qquad= \sum_{k = 1}^{2q} {( - 1)^{k - 1} \left( {\tan ^{ - 1} \frac{1}{{2k^2  - 4kq + 1}}} \right)^2 }\,,
\end{split}
\end{equation}
while that of identity~\eqref{equ.mhe9kvi} is
\begin{equation}
\begin{split}
&\sum_{k = 1}^\infty  {( - 1)^{k - 1} \tan ^{ - 1} \frac{{4kq}}{{2k^4  - 2k^2 (4q^2  + 1) + 1}}\tan ^{ - 1} \frac{{2k^2  - 1}}{{2k^2 (k^2  - 4q^2  - 1)}}}\\
&\qquad\qquad= \sum_{k = 1}^{2q} {( - 1)^{k - 1} \left( {\tan ^{ - 1} \frac{1}{{2k^2  - 4kq - 1}}} \right)^2 }\,.
\end{split}
\end{equation}
Additional alternating sums can be obtained from Lemma~\ref{L2} with the lower signs. For example, using $f(k)=k+\beta$ in Lemma~\ref{L2} with the lower signs we have the following result. 
\begin{thm}\label{thm.hy3p7rx}
If $\alpha$ and $\beta$ are real numbers and $m$ and $q$ are positive integers such that $q$ is odd, then
\[
\begin{split}
&\sum_{k = 1}^\infty  {(-1)^{k-1}\tan ^{ - 1} \frac{{\alpha (2k+2\beta+mq)}}{{k^2  + (2\beta + mq)k + \beta(\beta+mq) - \alpha ^2 }}\prod_{j = 1}^{m - 1} {\tan ^{ - 1} \frac{\alpha }{{k + jq + \beta}}} }\\
&\qquad\qquad\qquad= \sum_{k = 1}^q {(-1)^{k-1}\prod_{j = 0}^{m - 1} {\tan ^{ - 1} \frac{\alpha }{{k + jq + \beta}}} }\,.
\end{split}
\]

\end{thm}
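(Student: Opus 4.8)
The plan is to obtain this identity as a direct specialization of Lemma~\ref{L2}, in complete analogy with the derivation of Theorem~\ref{thm.sveqk2u} from Lemma~\ref{L1}. Since $q$ is assumed odd, I would invoke Lemma~\ref{L2} with the lower signs, for which the leading arctangent argument reads $\alpha(f(k+mq)+f(k))/(f(k)f(k+mq)-\alpha^2)$, and then set $f(k)=k+\beta$.

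The substantive step is a short algebraic check that this choice of $f$ reproduces every term in the asserted formula. With $f(k)=k+\beta$ one computes $f(k+mq)+f(k)=2k+2\beta+mq$ and
\[
f(k)f(k+mq)=(k+\beta)(k+mq+\beta)=k^2+(2\beta+mq)k+\beta(\beta+mq),
\]
so the leading factor becomes exactly
\[
\tan^{-1}\frac{\alpha(2k+2\beta+mq)}{k^2+(2\beta+mq)k+\beta(\beta+mq)-\alpha^2}.
\]
The auxiliary products $\prod_{j=1}^{m-1}\tan^{-1}(\alpha/f(k+jq))$ on the left and $\prod_{j=0}^{m-1}\tan^{-1}(\alpha/f(k+jq))$ on the right both collapse to the stated expressions once $f(k+jq)=k+jq+\beta$ is inserted, and the alternating weight $(-1)^{k-1}$ together with the finite sum $\sum_{k=1}^q$ on the right carries over unchanged from the lemma.

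The one point meriting attention is that $f(k)=k+\beta$ must meet the standing hypotheses of Lemma~\ref{L2}, namely that it be positive, non-decreasing, and divergent. Monotonicity and divergence are automatic, while positivity on $k\ge 1$ holds once $\beta>-1$; as in the earlier applications of the companion Lemma~\ref{L1}, the underlying subtraction and addition formulas~\eqref{equ.ie9kz2w} and~\eqref{equ.zkvzqba} supply the needed validity under the principal branch. Granting this, the theorem follows immediately from Lemma~\ref{L2} with no further computation; I expect no real obstacle beyond correctly pinning down the sign choice dictated by the parity of $q$.
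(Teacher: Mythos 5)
Your proposal is correct and is exactly the paper's own argument: the theorem is obtained by taking $f(k)=k+\beta$ in Lemma~\ref{L2} with the lower signs (the case $q$ odd), and your algebraic verification of the resulting terms matches the stated identity. Your added remark on the positivity hypothesis for $f$ is, if anything, slightly more careful than the paper itself.
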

\begin{cor}
If $m$ and $q$ are positive integers such that $q$ is odd, then
\[
\begin{split}
&\sum_{k = 1}^\infty  {( - 1)^{k - 1} \tan ^{ - 1} \frac{{m(2k + m(q - 1))}}{{2k^2  + 2mk(q - 1) - m^2 q}}\prod_{j = 1}^{m - 1} {\tan ^{ - 1} \frac{m}{{2k + 2jq - m}}} }\\
&\qquad\qquad= \sum_{k = 1}^q  {( - 1)^{k - 1} \prod_{j = 0}^{m - 1} {\tan ^{ - 1} \frac{m}{{2k + 2jq - m}}} }\,.
\end{split}
\]
\end{cor}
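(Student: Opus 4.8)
The plan is to obtain this as a direct specialization of Theorem~\ref{thm.hy3p7rx}, in the same spirit that Corollary~\ref{thm.pori4ri} is deduced from Theorem~\ref{thm.sveqk2u} by fixing $\alpha$. Since Theorem~\ref{thm.hy3p7rx} already holds for arbitrary real $\alpha,\beta$ and for odd $q$, no new analytic input is needed; the entire task reduces to choosing $\alpha$ and $\beta$ so that the two sides collapse onto the stated identity, and then verifying the algebra.

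First I would read off the correct substitution from the product factors, which are the simplest pieces to match. The generic factor in Theorem~\ref{thm.hy3p7rx} is $\tan^{-1}\frac{\alpha}{k+jq+\beta}$, and I want this to equal $\tan^{-1}\frac{m}{2k+2jq-m}$. Writing the target as $\tan^{-1}\frac{m/2}{k+jq-m/2}$ immediately suggests $\alpha = m/2$ and $\beta = -m/2$. With this choice the whole product $\prod_{j}$ on each side, and the right-hand sum $\sum_{k=1}^q(-1)^{k-1}\prod_{j=0}^{m-1}$, match the Corollary term by term.

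The remaining step is to confirm that the leading arctangent argument transforms correctly. Here I would substitute $\alpha=m/2$, $\beta=-m/2$ into $\frac{\alpha(2k+2\beta+mq)}{k^2+(2\beta+mq)k+\beta(\beta+mq)-\alpha^2}$ and simplify. The numerator becomes $\frac m2\bigl(2k+m(q-1)\bigr)$; for the denominator I use $2\beta+mq=m(q-1)$ and $\beta(\beta+mq)-\alpha^2 = -\tfrac{m^2(2q-1)}{4}-\tfrac{m^2}{4} = -\tfrac{m^2q}{2}$, giving $k^2+m(q-1)k-\tfrac{m^2q}{2}$. Clearing the common factor of $\tfrac12$ by multiplying numerator and denominator by $2$ yields exactly $\frac{m(2k+m(q-1))}{2k^2+2mk(q-1)-m^2q}$, as required.

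I expect no genuine obstacle: the only care needed is bookkeeping in the constant term of the denominator (combining the $\beta(\beta+mq)$ and $-\alpha^2$ contributions) and remembering to rescale both numerator and denominator by the same factor of $2$. The hypothesis that $q$ be odd is inherited unchanged from Theorem~\ref{thm.hy3p7rx}, and since $\alpha=m/2$ and $\beta=-m/2$ are real for every positive integer $m$, the substitution is unconditionally admissible.
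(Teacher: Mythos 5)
Your proposal is correct and coincides with the paper's own proof, which likewise sets $\alpha=m/2=-\beta$ in Theorem~\ref{thm.hy3p7rx}; your algebraic verification of the leading arctangent argument checks out. Nothing further is needed.
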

\begin{proof}
Set $\alpha=m/2=-\beta$ in Theorem~\ref{thm.hy3p7rx}.
\end{proof}
In particular, we have
\begin{equation}
\begin{split}
&\sum_{k = 1}^\infty  {( - 1)^{k - 1} \tan ^{ - 1} \frac{{2mk}}{{2k^2  - m^2 }}\prod_{j = 1}^{m - 1} {\tan ^{ - 1} \frac{m}{{2k + 2j - m}}} }\\
&\qquad\qquad= \prod_{j = 0}^{m - 1} {\tan ^{ - 1} \frac{m}{{2j - m + 2}}}\,,
\end{split}
\end{equation}
giving the special value
\begin{equation}
\sum_{k = 1}^\infty  {( - 1)^{k - 1} \tan ^{ - 1} \frac{{2k}}{{2k^2  - 1}}}  = \frac{\pi }{4}\,,
\end{equation}
and
\begin{equation}
\begin{split}
&\sum_{k = 1}^\infty  {( - 1)^k \tan ^{ - 1} \frac{{2(k + q - 1)}}{{k^2  + 2k(q - 1) - 2q}}\tan ^{ - 1} \frac{1}{{k + q - 1}}}\\
&= \frac{\pi }{2}\tan ^{ - 1} \frac{1}{q} + \sum_{k = 2}^q {( - 1)^k \tan ^{ - 1} \frac{1}{{k - 1}}\tan ^{ - 1} \frac{1}{{k + q - 1}}}\,,\quad\mbox{$q$ odd}\,,
\end{split}
\end{equation}
giving the special value,
\begin{equation}
\sum_{k = 1}^\infty  {( - 1)^k \tan ^{ - 1} \frac{{2k}}{{k^2  - 2}}\tan ^{ - 1} \frac{1}{k}}  = \frac{{\pi ^2 }}{8}\,.
\end{equation}
The next result is proved by using $f(k)=k^2-mqk+\beta$ in Lemma~\ref{L2} with the lower signs.
\begin{thm}\label{thm.stcc51n}
If $\alpha$ and $\beta$ are real numbers and $q$ and $m$ are positive integers such that $q$ is odd, then
\[
\begin{split}
&\sum_{k = 1}^\infty  {( - 1)^{k - 1} \tan ^{ - 1} \frac{{2\alpha (k^2  + \beta )}}{{k^4  - (m^2 q^2 - 2\beta )k^2  + \beta ^2  - \alpha ^2 }} \times }\\ 
&\qquad\qquad \times \prod_{j = 1}^{m - 1} {\tan ^{ - 1} \frac{\alpha }{{k^2  + (2j-m)qk - q^2 j(m - j)+\beta}}}\\ 
&\qquad\qquad\qquad = \sum_{k = 1}^q ( - 1)^{k - 1}{\prod_{j = 0}^{m - 1} {\tan ^{ - 1} \frac{\alpha }{{k^2  + (2j-m)qk - q^2 j(m - j)+\beta}}} }\,. 
\end{split}
\]

\end{thm}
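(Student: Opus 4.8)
The plan is to specialize Lemma~\ref{L2} to the sequence $f(k)=k^2-mqk+\beta$, taking the lower signs throughout since $q$ is assumed odd. This is the exact alternating-sum counterpart of the substitution carried out in Theorem~\ref{thm.slsjsoq}, where the same $f$ was fed into Lemma~\ref{L1}; I therefore expect the algebra to mirror that computation, the only structural change being that the lead factor is now built from the sum formula~\eqref{equ.zkvzqba} rather than the difference formula~\eqref{equ.ie9kz2w}, so that $f(k+mq)-f(k)$ is replaced by $f(k+mq)+f(k)$ and $f(k)f(k+mq)+\alpha^2$ by $f(k)f(k+mq)-\alpha^2$.

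First I would record the two shifted values. A short expansion gives $f(k+mq)=(k+mq)^2-mq(k+mq)+\beta=k^2+mqk+\beta$, so the two quadratics are conjugate in the linear term $mqk$. Hence $f(k+mq)+f(k)=2(k^2+\beta)$ and
\[
f(k)f(k+mq)=(k^2+\beta)^2-(mqk)^2=k^4-(m^2q^2-2\beta)k^2+\beta^2 .
\]
Feeding these into the lead factor $\tan^{-1}\frac{\alpha(f(k+mq)+f(k))}{f(k)f(k+mq)-\alpha^2}$ of Lemma~\ref{L2} (lower signs) yields precisely $\tan^{-1}\frac{2\alpha(k^2+\beta)}{k^4-(m^2q^2-2\beta)k^2+\beta^2-\alpha^2}$, the first arctangent in the statement.

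Next I would treat the product factors. For $0\le j\le m-1$,
\[
f(k+jq)=(k+jq)^2-mq(k+jq)+\beta=k^2+(2j-m)qk-q^2j(m-j)+\beta ,
\]
which is exactly the denominator appearing in each $\tan^{-1}\alpha/f(k+jq)$ on both sides of the asserted identity. The reason the left-hand product runs only over $j=1,\dots,m-1$ while the right-hand product includes $j=0$ is that the $f(k)$ and $f(k+mq)$ arctangents have been combined into the lead factor through~\eqref{equ.zkvzqba}; this matches the shape of Lemma~\ref{L2} term by term, and the $(-1)^{k-1}$ weights carry over unchanged. With these substitutions the conclusion of Lemma~\ref{L2} becomes verbatim the claimed identity.

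The main obstacle is not the algebra but the hypotheses of Lemma~\ref{L2}, which demand that $\{f(k)\}$ be positive, non-decreasing and divergent, whereas $f(k)=k^2-mqk+\beta$ decreases on $1\le k\le mq/2$ and can be non-positive there for small $\beta$. The resolution is identical to that already used in Theorem~\ref{thm.slsjsoq}: the identity rests on identity~(2.4) of~\cite{adegoke} together with the addition formula~\eqref{equ.zkvzqba}, whose genuine requirement is the principal-value domain condition $\alpha^2/\big(f(k)f(k+mq)\big)<1$ ensuring validity of~\eqref{equ.zkvzqba}. Since only the eventual monotonicity and divergence of $f$ (both of which hold here) are needed to guarantee convergence of the alternating series and the telescoping of the partial sums, the finitely many initial terms are absorbed into the finite right-hand sum, and the evaluation holds for all real $\alpha,\beta$ and all odd $q$.
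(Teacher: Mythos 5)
Your proposal is correct and is exactly the paper's argument: the paper proves this theorem in one line by substituting $f(k)=k^2-mqk+\beta$ into Lemma~\ref{L2} with the lower signs, and your computations of $f(k+mq)+f(k)$, $f(k)f(k+mq)$ and $f(k+jq)$ reproduce the stated identity verbatim. Your closing remark on the positivity/monotonicity hypotheses of Lemma~\ref{L2} is a reasonable extra caution that the paper itself does not address, but it does not change the route.
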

Setting $\alpha=m^2/2=\beta$ and $q=1$ in the identity of Theorem~\ref{thm.stcc51n} gives
\begin{equation}
\begin{split}
&\sum_{k = 1}^\infty  {( - 1)^{k - 1} \tan ^{ - 1} \frac{{m^2 (2k^2  + m^2 )}}{{2k^4 }} \times }\\
&\qquad\qquad\qquad\times \prod_{j = 1}^{m - 1} {\tan ^{ - 1} \frac{{m^2 }}{{2k^2  + (2j - m)2k - 2j(m - j) + m^2 }}}\\
&\qquad\qquad\qquad\qquad = \prod_{j = 0}^{m - 1} {\tan ^{ - 1} \frac{{m^2 }}{{2k^2  + (2j - m)2k - 2j(m - j) + m^2 }}}\,,
\end{split}
\end{equation}
giving, in particular,
\begin{equation}
\sum_{k = 1}^\infty  {( - 1)^{k - 1} \tan ^{ - 1} \frac{{2k^2  + 1}}{{2k^4 }}}  = \frac{\pi }{4}
\end{equation}
and
\begin{equation}
\sum_{k = 1}^\infty  {( - 1)^{k - 1} \tan ^{ - 1} \frac{{4(k^2  + 2)}}{{k^4 }}\tan ^{ - 1} \frac{2}{{k^2  + 1}}}  = \frac{\pi }{4}\tan ^{ - 1} 2\,.
\end{equation}
Setting $\alpha=m=-\beta$ and $q=1$ in the identity of Theorem~\ref{thm.stcc51n} gives
\begin{equation}
\begin{split}
&\sum_{k = 1}^\infty  {( - 1)^{k - 1} \tan ^{ - 1} \frac{{2m(k^2  - m)}}{{k^2 (k^2  - 2m - m^2 )}} \times }\\ 
&\qquad\qquad\times \prod_{j = 1}^{m - 1} {\tan ^{ - 1} \frac{m}{{k^2  + (2j - m)k - 2j(m - j) + j^2  - m(j + 1)}}}\\ 
&\qquad\qquad\qquad= \prod_{j = 0}^{m - 1} {\tan ^{ - 1} \frac{m}{{1 - (m - j)(j + 2)}}}\,,
\end{split}
\end{equation}
which has
\begin{equation}
\sum_{k = 1}^\infty  {( - 1)^k \tan ^{ - 1} \frac{{2(k^2  - 1)}}{{k^2 (k^2  - 3)}}}  = \frac{\pi }{4}
\end{equation}
and
\begin{equation}
\sum_{k = 1}^\infty  {( - 1)^{k - 1} \tan ^{ - 1} \frac{{4(k^2  - 2)}}{{k^2 (k^2  - 8)}}\tan ^{ - 1} \frac{2}{{k^2  - 3}}}  = \frac{\pi }{4}\tan ^{ - 1} \frac{2}{3}\,,
\end{equation}
as special values.

\end{document}